 \newtheorem{thm}{Theorem}[section]
 \newtheorem{cor}[thm]{Corollary}
 \newtheorem{prop}[thm]{Proposition}
 \theoremstyle{definition}
 \theoremstyle{remark}
 \newtheorem{rem}[thm]{Remark}
 \numberwithin{equation}{section}
 \newcommand{\Real}{\mathbb{R}}
 \newcommand{\ds}{\mathfrak f}
 \newcommand{\Tan}{^*\nabla}
 \newcommand{\Rm}{\textbf{Rm}}
\begin{document}

\title[Comparison Theorems on Contact Subriemannian Manifolds]
{Bishop and Laplacian Comparison Theorems on Three Dimensional Contact Subriemannian Manifolds with Symmetry}

\author{Andrei Agrachev}
\email{agrachev@sissa.it}
\address{International School for Advanced Studies, via Bonomea 265, 34136, Trieste, Italy and Steklov Mathematical Institute, ul. Gubkina 8, Moscow, 119991 Russia}

\author{Paul W.Y. Lee}
\email{wylee@math.cuhk.edu.hk}
\address{Room 216, Lady Shaw Building, The Chinese University of Hong Kong, Shatin, Hong Kong}

\date{\today}
\thanks{The first author was partially supported by the PRIN project and the second author was supported by the NSERC postdoctoral fellowship.}

\begin{abstract}

We prove a Bishop volume comparison theorem and a Laplacian comparison theorem for three dimensional contact subriemannian manifolds with symmetry.
\end{abstract}

\maketitle

\section{Introduction}

Recently, there are numerous progress in the understanding of curvature type invariants in subriemannian geometry and their applications to PDE \cite{Ju,LiZe1,LiZe2,AgLe2,BaGa1,BaGa2,BaBoGa,LiZe3}. In this paper, we continue to investigate some consequences on bounds of these curvature invariants.  More precisely, we prove a Bishop comparison theorem and a Laplacian comparison theorem for three dimensional contact subriemannian manifolds with symmetry (also called Sasakian manifolds). Weaker results of volume comparison on Sasakian manifolds have previously been obtained in \cite{ChYa1}. We would like to thank Professor Chanillo for pointing this out.

The paper is organized as follows. In section \ref{subriemannian}, we recall various notions in subriemannian geometry needed in this paper. In particular, we recall the definition of curvature $R_{11}$ and $R_{22}$ for three dimensional contact subriemannian manifolds introduced in \cite{LiZe1,LiZe2,AgLe2}. In section \ref{Tanaka}, we show that the curvature $R_{11}$ is closely related to the Tanaka-Webster curvature in CR geometry. In section \ref{spaceform}, we collect various results on the cut loci of Sasakian manifold with constant Tanaka-Webster curvature (also called Sasakian space forms). In section \ref{volestimate}, we give an estimate for the volume of subriemannian balls. In section \ref{subBishop}, we prove the subriemannian Bishop theorem which compares the volume of subriemannian balls of a Sasakian manifold and a Sasakian space form. We introduce the subriemannian Hessian and sub-Laplacian in section \ref{HessLaplace} and give the formula for the Laplacian of the subriemannian distance in Sasakian space form in section \ref{LaplaceSpaceForm}. We prove a subriemannian Hessian and a subriemannian Laplacian comparison theorem in section \ref{subLaplace}. As an application, we give a lower bound of the solution to the subriemannian heat equation in section \ref{ChYau}.

\smallskip

\section*{Acknowledgment}
The authors would like to thank N. Garofalo for stimulating discussions.

\smallskip

\section{Subriemannian Geometry}\label{subriemannian}

In this section, we recall various notions in subriemannian geometry needed in this paper.
A subriemannian manifold is a triple $(M,\Delta,g)$, where $M$ is a smooth manifold, $\Delta$ is a distribution (a vector subbundle of the tangent bundle of $M$), and $g$ is a fibrewise inner product defined on the distribution $\Delta$. The inner product $g$ is also called a subriemannian metric. An absolutely continuous curve $\gamma:[0,1]\to M$ on the manifold $M$ is called horizontal if it is almost everywhere tangent to the distribution $\Delta$. We can use the inner product $g$ to define the length $l(\gamma)$ of a horizontal curve $\gamma$ by
\[
l(\gamma)=\int_0^1g(\dot\gamma(t),\dot\gamma(t))^{1/2}dt.
\]

The distribution $\Delta$ is called bracket-generating if vector fields contained in $\Delta$ together with their iterated Lie brackets span the whole tangent bundle. More precisely, let $\Delta_1$ and $\Delta_2$ be two distributions on a manifold $M$, and let $\mathfrak X(\Delta_i)$ be the space of all vector fields contained in the distribution $\Delta_i$. Let $[\Delta_1,\Delta_2]$ be the distribution defined by
\[
[\Delta_1,\Delta_2]_x=\text{span}\{[w_1,w_2](x)|w_i\in\mathfrak X(\Delta_i)\}.
\]

We define inductively the following distributions: $[\Delta,\Delta]=\Delta^2$ and $\Delta^k=[\Delta,\Delta^{k-1}]$. A
distribution $\Delta$ is called bracket generating if $\Delta^k=TM$ for some $k$. Under the bracket generating assumption, we have the following famous Chow-Rashevskii Theorem (see \cite{Mo} for a proof):

\begin{thm}(Chow-Rashevskii)
 Assume that the manifold $M$ is connected and the distribution $\Delta$ is bracket generating, then there is a horizontal curve joining any two given points.
\end{thm}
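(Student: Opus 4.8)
The plan is to prove the theorem by an orbit argument: call two points of $M$ equivalent if they can be joined by a horizontal curve, and let $\mathcal{O}(x)$ denote the equivalence class (the \emph{orbit}) of $x$. Since horizontal curves can be reversed and concatenated, and constant curves are horizontal, this is genuinely an equivalence relation, so $M$ is partitioned into orbits. Because $M$ is connected, it suffices to show that every orbit is open: the orbits are then disjoint open sets covering $M$, so each is also closed, and connectedness forces there to be a single orbit, which is exactly the assertion that any two points are joined by a horizontal curve.

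To prove that $\mathcal{O}(x)$ is open I would produce, for each $x$, a neighborhood of $x$ contained in $\mathcal{O}(x)$. Fix a local orthonormal frame $X_1,\dots,X_m$ of horizontal vector fields near $x$, and consider the points reachable from $x$ by concatenating the flows $e^{tX_i}$ (for both signs of $t$); every such point lies in $\mathcal{O}(x)$. The goal is to build a smooth map $F\colon V\to M$ from a neighborhood $V$ of the origin in $\Real^{n}$, with $n=\dim M$, such that $F(0)=x$, the image of $F$ lies entirely in $\mathcal{O}(x)$, and the differential $dF_0$ is surjective. The inverse function theorem (after restricting to a slice on which $F$ is an immersion, or via the rank theorem) then guarantees that $F(V)$ contains an open neighborhood of $x$, which proves openness.

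The heart of the argument, and the step I expect to be the main obstacle, is to realize motion in the direction of an arbitrary iterated Lie bracket by concatenating horizontal flows. The basic mechanism is the commutator flow: for horizontal fields $X,Y$ the curve
\[
\psi(s)=e^{-sY}\circ e^{-sX}\circ e^{sY}\circ e^{sX}(x)
\]
satisfies $\psi(0)=x$ and, by the Baker--Campbell--Hausdorff expansion, $\psi(s)=x+s^{2}[X,Y](x)+o(s^{2})$, so after the reparametrization $s=\sqrt{t}$ the curve $t\mapsto\psi(\sqrt t)$ lies in $\mathcal{O}(x)$ and has velocity $[X,Y](x)$ at $t=0^{+}$; reversing the order produces $-[X,Y](x)$. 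Iterating this construction on brackets of increasing length realizes every iterated bracket direction, and this is precisely where the bracket-generating hypothesis $\Delta^{k}=TM$ enters: it guarantees that finitely many such bracket fields $Z_{1},\dots,Z_{n}$ span $T_xM$.

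Finally I would assemble these one-parameter families into the map $F$ by letting the $j$-th coordinate drive the flow construction realizing $Z_j$, arranging the reparametrizations so that $F$ is smooth and $\partial F/\partial t_j|_{0}=Z_j(x)$. Since the $Z_j(x)$ span $T_xM$, the differential $dF_0$ is surjective, completing the argument. The delicate points are the smoothness of $F$ after the square-root (and higher-root) reparametrizations and the verification that the partial velocities are exactly the chosen bracket fields; these are handled by careful BCH bookkeeping and an induction on the length of the brackets.
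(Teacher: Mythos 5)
The paper does not actually prove this theorem: it is quoted as a classical result with an explicit pointer to Montgomery's book \cite{Mo}, so there is no in-paper argument to compare against, and your proposal must be judged on its own. Its skeleton is the standard one and is correct: the reduction via the equivalence relation (orbits are open, hence closed, hence unique by connectedness) is exactly right, and the commutator word $e^{-sY}\circ e^{-sX}\circ e^{sY}\circ e^{sX}$ with the reparametrization $s=\sqrt{t}$, iterated via nested words with $s=t^{1/k}$ for brackets of length $k$, is the right mechanism for realizing bracket directions inside the orbit.

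The one step that would fail as literally stated is the appeal to the inverse function theorem after asserting ``smoothness of $F$'': no amount of BCH bookkeeping will make $F$ smooth at the origin, because the root reparametrizations $t\mapsto t^{1/k}$ destroy $C^\infty$ (indeed already $C^2$) regularity there. What you can and must prove instead is that $F$ is $C^1$ near $0$: if the word $W_s$ realizing $Z_j$ satisfies $W_s(y)=y+s^{k}Z_j(y)+O(s^{k+1})$ smoothly in $(s,y)$, then Taylor's theorem applied to $\partial_s W_s$ gives $\partial_t\bigl(W_{t^{1/k}}\bigr)=Z_j+O(t^{1/k})$, so each partial derivative of $F$ extends continuously to $0$ with value $Z_j(x)$ (using the reversed word for $t<0$, whose one-sided derivative matches); the $C^1$ inverse function theorem then applies, and the cross terms between factors are harmless since each word displaces points by $O(t_j)$. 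Alternatively, avoid regularity at $0$ altogether: since $dF$ is continuous and $dF_0$ is invertible, $dF_{\bar t}$ has full rank at some small $\bar t$ in the open orthant where $F$ is genuinely smooth, so $F$ maps a neighborhood of $\bar t$ onto an open set contained in $\mathcal{O}(x)$, and openness of the orbit follows because orbits through points of $\mathcal{O}(x)$ coincide with $\mathcal{O}(x)$. A third standard route, essentially the one behind the proof cited in \cite{Mo}, bypasses BCH entirely: at a parameter where a composition of horizontal flows attains locally maximal rank, its image is an immersed submanifold invariant under all horizontal flows, so its tangent space contains $\Delta$ and all iterated brackets, hence equals the full tangent space by the bracket-generating hypothesis, forcing the orbit to be open.
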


Assuming the distribution $\Delta$ is bracket generating, we can define the subriemannian or Carnot-Caratheodory distance $d(x,y)$ between two points $x$ and $y$ on the manifold $M$ is defined by
\begin{equation}\label{CCdistance}
 d(x,y)=\inf l(\gamma),
\end{equation}
where the infimum is taken over all horizontal curves which start from $x$ and end at $y$.

The horizontal curves which realize the infimum in (\ref{CCdistance}) are called length minimizing geodesics. From now on all manifolds are assumed to be a complete metric space with respect to a given subriemannian distance. In particular, a version of Hopf-Rinow theorem for subriemannian manifolds (\cite{BeRi}) guarantees that there is at least one geodesic joining any two given points.

Next we discuss the geodesic equation in the subriemannian setting. Let $\alpha$ be a covector in the cotangent space $T^*_xM$ at the point $x$. By nondegeneracy of the metric $g$, we can define a vector $v$ in the distribution $\Delta_x$ such that $g(v,\cdot)$ coincides with $\alpha(\cdot)$ on $\Delta_x$. The subriemannian Hamiltonian $H$ corresponding to the subriemannian metric $g$ is defined by
\[
 H(\alpha):=\frac{1}{2}g(v,v).
\]
Note that this construction defines the usual kinetic energy Hamiltonian in the Riemannian case.

Let $\pi:T^*M\to M$ be the projection map. The tautological one form $\theta$ on $T^*M$ is defined by
\[
 \theta_\alpha(V)=\alpha(d\pi(V)),
\]
where $\alpha$ is in the cotangent bundle $T^*M$ and $V$ is a tangent vector on the manifold $T^*M$ at $\alpha$.

Let $\omega=d\theta$ be the symplectic two form on $T^*M$. The Hamiltonian vector field $\vec H$ corresponding to the Hamiltonian $H$ is defined by $\omega(\vec H,\cdot)=-dH(\cdot)$. By the non-degeneracy of the symplectic form $\omega$, the Hamiltonian vector field  $\vec H$ is uniquely defined. We denote the flow corresponding to the vector field $\vec H$ by $e^{t\vec H}$. If $t\mapsto e^{t\vec H}(\alpha)$ is a trajectory of the above Hamiltonian flow, then its projection $t\mapsto \gamma(t)=\pi(e^{t\vec H}(\alpha))$ is a locally minimizing geodesic. This means that sufficiently short segment of the curve $\gamma$ is a minimizing geodesic between its endpoints. The minimizing geodesics obtained this way are called normal geodesics. In the special case where the distribution $\Delta$ is the whole tangent bundle $TM$, the distance function (\ref{CCdistance}) is the usual Riemannian distance and all geodesics are normal. However, this is not the case for subriemannian manifolds in general (see \cite{Mo} and reference therein for more detail).

Next we restrict our attention to the three dimensional contact subriemannian manifold. Let $\Delta$ be a bracket generating distribution with two dimensional fibres on a three dimensional manifold $M$. $\Delta$ is a contact distribution if there exists a covector $\sigma$ such that $\Delta=\{v|\sigma(v)=0\}$ and the restriction of $d\sigma$ to $\Delta$ is non-degenerate. If we fix a subriemannian metric $g$, then we can choose $\sigma$ so that the restriction of $d\sigma$ to the distribution $\Delta$ coincides with the volume form with respect to the subriemannian metric $g$.

Let $\{v_1,v_2\}$ be a local orthonormal frame in the distribution $\Delta$ with respect to the subriemannian metric $g$ and let $v_0$ be the Reeb field defined by the conditions $\sigma(v_0)=1$ and $d\sigma(v_0,\cdot)=0$. This defines a frame $\{v_0,v_1,v_2\}$ in the tangent bundle $TM$ and we let  $\{\alpha_0=\sigma,\alpha_1,\alpha_2\}$ be the corresponding dual co-frame in the cotangent bundle $T^*M$ (i.e. $\alpha_i(v_j)=\delta_{ij}$).

The frame $\{v_0,v_1,v_2\}$ and the co-frame $\{\alpha_0,\alpha_1,\alpha_2\}$ defined above induce a frame in the tangent bundle $TT^*M$ of the cotangent bundle $T^*M$. Indeed, let $\vec\alpha_i$ be the vector fields on the cotangent bundle $T^*M$ defined by $i_{\vec\alpha_i}\omega=-\alpha_i$. Note that the symbol $\alpha_i$ in the definition of $\vec\alpha_i$ represents the pull back $\pi^*\alpha_i$ of the 1-form $\alpha$ on the manifold $M$ by the projection $\pi:T^*M\to M$. This convention of identifying forms in the manifold $M$ and its pull back on the cotangent bundle $T^*M$ will be used for the rest of this paper without mentioning. Let $h_i:T^*M\to\Real$ be the Hamiltonian lift of the vector fields $v_i$ defined by $h_i(\alpha)=\alpha(v_i)$. Let $\xi_1$ and $\xi_2$ be the 1-forms defined by $\xi_1=h_1\alpha_2-h_2\alpha_1$ and $\xi_2=h_1\alpha_1+h_2\alpha_2$, respectively, and let $\vec\xi_i$ be the vector fields defined by $i_{\vec\xi_i}\omega=-\xi_i$. The vector fields $\vec h_0, \vec h_1,\vec h_2, \vec\sigma, \vec\xi_1, \vec\xi_2$ define a local frame for the tangent bundle $TT^*M$ of the cotangent bundle $T^*M$. In the above notation the subriemannian Hamiltonian is given by $H=\frac{1}{2}((h_1)^2+(h_2)^2)$ and the Hamiltonian vector field is $\vec H=h_1\vec h_1+h_2\vec h_2$.

We also need the bracket relations of the vector fields $v_0,v_1,v_2$. Let $a_{ij}^k$ be the functions on the manifold $M$ defined by

\begin{equation}\label{bracket}
 [v_i,v_j]= a_{ij}^0v_0+a_{ij}^1v_1+a_{ij}^2v_2.
\end{equation}

It is not hard to check that
\begin{equation}\label{coeff}
a_{01}^0=a_{02}^0=0, \quad a_{12}^0=-1, \quad a_{01}^1+a_{02}^2=0.
\end{equation}

Recall that a basis $\{e_1,...,e_n,f_1,...,f_n\}$ in a symplectic vector space with a symplectic form $\omega$ is a Darboux basis if it satisfies $\omega(e_i,e_j)=\omega(f_i,f_j)=0$, and $\omega(f_i,e_j)=\delta_{ij}$. We recall the following theorem from \cite{AgLe2}.

\begin{thm}\label{structural}
For each fixed $\alpha$ in the manifold $T^*M$, there is a moving Darboux frame
\[
e_i(t)=(e^{t\vec H})^*e_i(0),\quad f_i(t)=(e^{t\vec H})^*f_i(0),\quad i=1,2,3
\]
in the symplectic vector space $T_\alpha T^*M$ and functions
\[
R^{11}_t=(e^{t\vec H})^*R^{11}_0,R^{22}_t=(e^{t\vec H})^*R^{22}_0:T^*M\to\Real
\]
depending on time $t$ such that the following structural equations are satisfied
\[
\left\{
  \begin{array}{ll}
    \dot e_1(t)=f_1(t), \\
    \dot e_2(t)=e_1(t), \\
    \dot e_3(t)=f_3(t), \\
    \dot f_1(t)=-R^{11}_te_1(t)-f_2(t),\\
    \dot f_2(t)=-R^{22}_te_2(t),\\
    \dot f_3(t)=0.
  \end{array}
\right.
\]

Moreover,
\[
\left\{
  \begin{array}{ll}
    e_1(0)=\frac{1}{\sqrt{2H}}\vec\xi_1,\\
    e_2(0)=\frac{1}{\sqrt{2H}}\vec\sigma,\\
    e_3(0)=-\frac{1}{\sqrt{2H}}(h_0\vec \alpha_0+h_1\vec \alpha_1+h_2\vec \alpha_2),\\
    f_1(0)=\frac{1}{\sqrt{2H}}[h_1\vec h_2-h_2\vec h_1+\chi_0\vec\alpha_0+(\vec\xi_1h_{12})\vec\xi_1-h_{12}\vec\xi_2],\\
    f_2(0)=\frac{1}{\sqrt{2H}}[2H\vec h_0-h_0\vec H-\chi_1\vec\alpha_0+({\vec\xi_1} a)\vec\xi_1-a\vec\xi_2],\\
    f_3(0)=-\frac{1}{\sqrt{2H}}\vec H,\\
    R^{11}_0=h_0^2+2H\kappa-\frac{3}{2}\vec\xi_1 a,\\
    R^{22}_0=R^{11}_0\vec\xi_1 a-3\vec{H} \vec\xi_1\vec{H} a+3\vec{H}^2\vec\xi_1 a+ \vec\xi_1\vec{H}^2 a.
  \end{array}
\right.
\]
where
\[
\begin{array}{ll}
a=dh_0(\vec H),\\
\chi_0=h_2h_{01}-h_1h_{02}+\vec\xi_1a, \\
\chi_1=h_0a+2{\vec H}\vec\xi_1 a-\vec\xi_1\vec Ha,\\
\kappa=v_1a_{12}^2-v_2a_{12}^1-(a_{12}^1)^2-(a_{12}^2)^2-\frac{1}{2}(a_{01}^2-a_{02}^1),\\
h_{12}(\alpha)=\alpha([v_1,v_2]).
\end{array}
\]
\end{thm}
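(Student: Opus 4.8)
The plan is to reduce the time-dependent structural system to a finite set of Lie-bracket identities at $t=0$, verify those by direct computation in the Hamiltonian formalism, and use the general theory of canonical frames only to know in advance that such a closing frame exists. The first observation is that every object in the statement is the $(e^{t\vec H})^*$-pullback of its value at time $0$: the frame vectors $e_i(t),f_i(t)$ and the functions $R^{11}_t,R^{22}_t$. Since the pullback along a flow satisfies $\frac{d}{dt}(e^{t\vec H})^*X=(e^{t\vec H})^*\mathcal{L}_{\vec H}X=(e^{t\vec H})^*[\vec H,X]$, and since $(e^{t\vec H})^*$ is a linear isomorphism of $T_\alpha T^*M$, each structural ODE is equivalent to the corresponding bracket identity with $\vec H$ evaluated at $t=0$. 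Thus the theorem reduces to the six identities
\[
[\vec H,e_1]=f_1,\quad [\vec H,e_2]=e_1,\quad [\vec H,e_3]=f_3,
\]
\[
[\vec H,f_1]=-R^{11}_0e_1-f_2,\quad [\vec H,f_2]=-R^{22}_0e_2,\quad [\vec H,f_3]=0,
\]
together with the Darboux relations $\omega(e_i,e_j)=\omega(f_i,f_j)=0$, $\omega(f_i,e_j)=\delta_{ij}$ at $t=0$. These relations then persist for all $t$ automatically, because the Hamiltonian flow is symplectic, so each $(e^{t\vec H})^*$ preserves $\omega_\alpha$ and the inner products $\omega(e_i(t),e_j(t))$ are constant in $t$.

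Next I would set up the bracket calculus on $T^*M$. The basic tool is the Poisson correspondence for Hamiltonian lifts, $[\vec f,\vec g]=\overrightarrow{\{f,g\}}$, together with $\{h_i,h_j\}(\alpha)=\alpha([v_i,v_j])=\sum_k a_{ij}^k h_k$, which is read off from the bracket relations (\ref{bracket}) and the normalization (\ref{coeff}). Writing $\vec H=h_1\vec h_1+h_2\vec h_2$ and expanding $[\vec H,\cdot]$ by the Leibniz rule, I would compute in turn the brackets $[\vec H,\vec h_i]$, $[\vec H,\vec\alpha_j]$, $[\vec H,\vec\sigma]$ and $[\vec H,\vec\xi_i]$, reexpressing each in the frame $\{\vec h_0,\vec h_1,\vec h_2,\vec\sigma,\vec\xi_1,\vec\xi_2\}$. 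This is exactly where the auxiliary functions enter: the derivatives $\vec\xi_1 a$ and $\vec H a$, the Poisson brackets $h_{ij}$, and the combinations $\chi_0,\chi_1,\kappa$ are precisely the coefficients that arise when these brackets are expanded in the frame.

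With the bracket table in hand, I would verify the six identities by climbing the two chains $e_2\to e_1\to f_1\to f_2$ and $e_3\to f_3$. The three first-order identities and $[\vec H,f_3]=0$ should fall out directly from the definitions of $e_i(0)$ and $f_3(0)$. The curvature functions are then read off as the coefficients forcing the remaining two identities to close: $R^{11}_0$ is the coefficient of $e_1$ in $-\bigl([\vec H,f_1]+f_2\bigr)$, yielding $R^{11}_0=h_0^2+2H\kappa-\frac32\vec\xi_1 a$, while $R^{22}_0$ is the coefficient of $e_2$ in $-[\vec H,f_2]$, yielding the stated expression in iterated derivatives of $a$.

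The main obstacle will be the last two computations, the ones producing $f_2(0)$ and $R^{22}_0$: they require controlling the iterated derivatives $\vec H\vec\xi_1 a$, $\vec\xi_1\vec H a$, $\vec H^2\vec\xi_1 a$ and $\vec\xi_1\vec H^2 a$ and showing that every term outside the span of $e_2$ cancels. Guessing the correct linear combinations in $f_1,f_2$ and the correct curvature expressions from scratch would be impractical, so I would first invoke the general existence-and-uniqueness theorem for the canonical (normal) moving frame of a monotone Jacobi curve in the Lagrange Grassmannian with the Young diagram dictated by the three dimensional contact growth vector. That abstract result guarantees that the structural equations have exactly the displayed shape and that $R^{11},R^{22}$ are well-defined symplectic invariants; the remaining task is the explicit identification of the canonical frame with the geometric frame $e_i(0),f_i(0)$ and the evaluation of the two curvatures, which is the long but essentially routine part.
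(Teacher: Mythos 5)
Your reduction of the time-dependent structural system to six bracket identities at $t=0$ via $\frac{d}{dt}(e^{t\vec H})^*X=(e^{t\vec H})^*[\vec H,X]$, with symplecticity of the Hamiltonian flow propagating the Darboux relations, is sound, and the easy chains do close as you predict (e.g. $[\vec H,\vec\sigma]=\vec\xi_1$ gives the $e_2$ equation and $[\vec H,\cdot]$ applied to the Euler field gives the $e_3$ equation), with the hard part honestly identified as the computations producing $f_2(0)$ and $R^{22}_0$. This is essentially the approach of the actual proof: the present paper gives none — Theorem \ref{structural} is recalled from \cite{AgLe2} — and the argument there is exactly what you outline, namely the Li--Zelenko canonical moving frame for the Jacobi curve of a three dimensional contact structure (Young diagram with rows of lengths $2$ and $1$) pinned down by explicit Poisson-bracket computations in the frame $\vec h_0,\vec h_1,\vec h_2,\vec\sigma,\vec\xi_1,\vec\xi_2$.
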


\smallskip

\section{Connection with the Tanaka-Webster Scalar Curvature}\label{Tanaka}

In this section, we show that $\kappa$ defined in Theorem \ref{structural} coincides with the Tanaka-Webster scalar curvature in CR geometry.

Following \cite{Ta}, we first recall the definition of the Riemannian metric $g^R$ associated to $\alpha_0$. Using the notations in the previous section, the Riemannian metric $g^R$ is defined by the conditions that $\{v_0,v_1,v_2\}$ is orthogonal and the followings hold
\[
g^R(v_0,v_0)=1\quad\text{and} \quad g^R(v_1,v_1)=g^R(v_2,v_2)=\frac{1}{2}.
\]
Let $\mathbf K(v,w)$ be the sectional curvature of the plane spanned by $v$ and $w$ and let $\mathbf{Rc}(v)$ be the Ricci curvature of the vector $v$.

If $\nabla$ denotes the Riemannian connection, then the Tanaka connection $\Tan$ is defined by
\[
\Tan_XY=\nabla_XY + \alpha_0(X)JY -\alpha_0(Y)\nabla_Xv_0+(\nabla_X\alpha_0)(Y)v_0,
\]
where $J$ satisfies $Jv_0=0$, $Jv_1=-v_2$, and $Jv_2=v_1$.

The scalar curvature of the above connection $\Tan$ is called the Tanaka-Webster curvature \cite{Ta,Bl}.

\begin{thm}
The invariant $\kappa$ coincides with the Tanaka-Webster curvature and it satisfies
\[
\kappa=2\mathbf K(v_1,v_2)+\mathbf{Rc}(v_0)+4.
\]
\end{thm}

\begin{proof}
By Koszul's formula, we have the followings
\[
\begin{split}
&\nabla_{v_0}v_0=0, \quad \nabla_{v_1}v_1=\frac{1}{2}a_{01}^1v_0-a_{12}^1v_2,\\
&\nabla_{v_1}v_0=-a_{01}^1v_1-\frac{1}{2}(a_{01}^2+a_{02}^1-2)v_2,\\
&\nabla_{v_1}v_2=\frac{1}{4}(a_{01}^2+a_{02}^1-2)v_0+a_{12}^1v_1,\\
&\nabla_{v_0}v_1=\frac{1}{2}(a_{01}^2-a_{02}^1+2)v_2,\\
&\nabla_{v_2}v_1=\frac{1}{4}(a_{01}^2+a_{02}^1+2)v_0-a_{12}^2v_2,\\
&\nabla_{v_0}v_2=-\frac{1}{2}(a_{01}^2-a_{02}^1+2)v_1\\
&\nabla_{v_2}v_0=-a_{02}^2v_2-\frac{1}{2}(a_{01}^2+a_{02}^1+2)v_1\\
&\nabla_{v_2}v_2=\frac{1}{2}a_{02}^2v_0+a_{12}^2v_1.
\end{split}
\]

If $X$ and $Y$ are contained in the distribution $\Delta$, then
\[
\Tan_XY=\nabla_XY-\alpha_0(\nabla_XY)v_0.
\]
So $\Tan_XY$ is the projection of $\nabla_XY$ onto $\Delta$ in this case. Therefore,
\[
\begin{split}
&\Tan_{v_1}v_1=-a_{12}^1v_2,\quad \Tan_{v_2}v_2=a_{12}^2v_1,\quad \Tan_{v_1}v_2=a_{12}^1v_1,\quad \Tan_{v_2}v_1=-a_{12}^2v_2.
\end{split}
\]

Since $\Tan_Xv_0=-\alpha_0(\nabla_Xv_0)v_0$, we also have $\Tan v_0=0$. It also follows that
\[
\begin{split}
&\Tan_{v_0}v_0=0, \quad \Tan_{v_1}v_1=a_{01}^1v_0-a_{12}^1v_2, \quad \Tan_{v_1}v_0=0.
\end{split}
\]

Finally, we have $\Tan_{v_0}Y=\nabla_{v_0}Y -\alpha_0(\nabla_{v_0}Y)v_0 + JY$. Therefore,
\[
\begin{split}
&\Tan_{v_0}v_1=\frac{1}{2}(a_{01}^2-a_{02}^1)v_2,\quad \Tan_{v_0}v_2=-\frac{1}{2}(a_{01}^2-a_{02}^1)v_1.
\end{split}
\]

We denote the curvature tensor with respect to the connection $\Tan$ by $\Rm$. Since $\Tan v_0=0$, the scalar curvature of $\Rm$ is given by
\[
\begin{split}
2g^R(\Rm(v_2,v_1)v_1,v_2)&=v_1a_{12}^2-v_2a_{12}^1-(a_{12}^1)^2-(a_{12}^2)^2-\frac{1}{2}(a_{01}^2-a_{02}^1)\\
&=\kappa
\end{split}
\]
as claimed.

The assertion $\kappa=2\mathbf K(v_1,v_2)+\mathbf{Rc}(v_0)+4$ follows from a similar calculation. This can also be found in \cite{Bl}.

\end{proof}

\smallskip

\section{Sasakian Space Forms}\label{spaceform}

A three dimensional contact subriemannian manifold is Sasakian if the Reeb field preserves the subriemannian metric. Using the notation of this paper, it is the same as $a=dh_0(\vec H)=0$. A three dimensional Sasakian manifold is a Sasakian space form if the Tanaka-Webster scalar curvature is constant. In this section, we collect various facts about the injectivity domain (see below for the definition) of Sasakian space forms including the recent results in \cite{BoRo}.

Let $(M,\Delta,g)$ be a subriemannian manifold. Let $H$ be the subriemannian Hamiltonian and let $e^{t\vec H}$ be the Hamiltonian flow. Let $\pi:T^*M\to M$ be the projection map and let us fix a point $x$ in the manifold $M$. Let $\Omega_x$ be the set of all covectors $\alpha$ in the cotangent space $T^*_xM$ such that the curve $\gamma:[0,1]\to M$ defined by $\gamma(t)=\pi(e^{t\vec H}(\alpha))$ is a length minimizing geodesic. We call $\Omega=\bigcup_x\Omega_x$ the injectivity domain of the subriemannian manifold. We also let $\Omega^R_x$ be the set of covectors in $\Omega_x$ such that the corresponding curve $\gamma$ has length less than or equal to $R$. A point $\alpha$ in $T^*_xM$ is a cut point if $\gamma(t)=\pi(e^{t\vec H}(\alpha))$ is minimizing geodesic on $[0,1]$ and not minimizing on any larger interval. A point $\alpha$ is a conjugate point if the map $\pi(e^{1\cdot \vec H})$ is singular at $\alpha$.

The Heisenberg group $\mathbb H$ is a well-known example of a Sasakian manifold with vanishing Tanaka-Webster curvature. The manifold in this case is given by $\Real^3$ and the distribution $\Delta$ is the span of two vector fields $\partial_x-\frac{1}{2}y\partial_z$ and $\partial_y+\frac{1}{2}x\partial_z$. These two vector fields also define a subriemannian metric for which they are orthonormal. In this case all cut points are conjugate points and $\Omega^R$ is given by
\begin{equation}\label{Hcut}
\Omega^R_0=\{\alpha|\sqrt{2H(\alpha)}\leq R,-2\pi\leq h_0(\alpha)\leq 2\pi\}.
\end{equation}

Recall that $SU(2)$, the special unitary group, consists of $2\times2$ matrices with complex coefficients and determinant 1. The Lie algebra $su(2)$ consists of skew Hermitian matrices with trace zero. The left invariant vector fields of the following two elements in $su(2)$
\[
u_1=\left(
\begin{array}{cc}
0 & 1/2\\
-1/2 & 0
\end{array}\right),\quad u_2=\left(
\begin{array}{cc}
0 & i/2\\
i/2 & 0
\end{array}\right)
\]
span the standard distribution $\Delta$ on $SU(2)$. Let $g^c$ be the subriemannian metric for which $g^c(cu_1,cu_2)=1$. The Reeb field in this case is $c^2u_0$, where
\[
u_0=\left(
\begin{array}{cc}
i/2 & 0\\
0 & -i/2
\end{array}\right).
\]
A computation shows that the Tanaka-Webster curvature is given by $c^2$. It follows from the result in \cite{BoRo} that all cut points are conjugate points in this case and $\Omega^R$ is given by
\begin{equation}\label{SU2cut}
\Omega^R_{id}=\{\alpha|\sqrt{h_0(\alpha)^2+2c^2H(\alpha)}\leq 2\pi\}.
\end{equation}

\begin{rem}
The notations in here and that of \cite{BoRo} are slightly different. It was shown in \cite[Theorem 12]{BoRo} that all cut points are conjugate points in the case $SU(2)$ with $c=1$. Moreover, the path $t\mapsto\pi(e^{t\vec H}(\alpha))$ hits the first conjugate point at the time $\frac{2\pi}{\sqrt{1+h_0(\alpha)^2}}$, where $H(\alpha)=\frac{1}{2}$. This is equivalent to (\ref{SU2cut}). The rest of the cases with $c\neq 1$ follow from scaling.
\end{rem}

The special linear group $SL(2)$ is the set of all $2\times 2$ matrices with real coefficients and determinant 1. The Lie algebra $sl(2)$ is the set of all $2\times 2$ real matrices with trace zero. The left invariant vector fields of the following two elements in $sl(2)$
\[
u_1=\left(
\begin{array}{cc}
1/2 & 0\\
0 & -1/2
\end{array}\right),\quad u_2=\left(
\begin{array}{cc}
0 & 1/2\\
1/2 & 0
\end{array}\right)
\]
span the standard distribution $\Delta$ on $SL(2)$. Let $g^c$ be the subriemannian metric for which $g^c(cu_1,cu_2)=1$. The Reeb field in this case is $c^2u_0$, where
\[
u_0=\left(
\begin{array}{cc}
0 & -1/2\\
1/2 & 0
\end{array}\right).
\]
The Tanaka-Webster curvature is given by $-c^2$. The structure of the set of cut points in this case is much more complicated. However, the result in \cite{BoRo} and a computation shows the following.

\begin{thm}\label{SL2cut}
Assume that a cut point $\alpha$ in the cotangent bundle of $SL(2)$ with subriemannian metric $g^c$ is contained in $\Omega^R_{id}$, where $R=\frac{2\sqrt 2\,\pi}{c}$. Then it is a conjugate point. Moreover, it satisfies
\[
\sqrt{|h_0(\alpha)^2-2c^2H(\alpha)|}=2\pi
\]
\end{thm}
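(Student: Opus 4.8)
The plan is to read off the relevant curvature invariants from Theorem \ref{structural} and then combine an explicit conjugate-point computation with the cut-locus description of \cite{BoRo}. First I would record that for $SL(2)$ with the metric $g^c$ one has $a=dh_0(\vec H)=0$ (the manifold is Sasakian) and $\kappa=-c^2$. Feeding $a=0$ and $\kappa=-c^2$ into the formulas of Theorem \ref{structural} collapses the curvatures to $R^{11}_0=h_0^2-2c^2H$ and $R^{22}_0=0$, and both are constant along each normal extremal, since $\dot h_0=a=0$ while $H$ and $\kappa$ are first integrals. Writing $\rho:=h_0^2-2c^2H=R^{11}_0$, the statement splits into two parts: (i) a conjugate endpoint forces $\sqrt{|\rho|}=2\pi$, and in fact forces $\rho>0$; and (ii) within $\Omega^R$, $R=\frac{2\sqrt2\,\pi}{c}$, every cut point is conjugate.

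For (i) I would compute the first conjugate time straight from the structural equations. The key point is that the vertical subspace $V_\alpha=\ker d\pi_\alpha$ is exactly $\mathrm{span}\{e_1(0),e_2(0),e_3(0)\}$: the fields $\vec\sigma=\vec\alpha_0$ and $\vec\xi_1$ are Hamiltonian lifts of one-forms that annihilate the vertical distribution, hence lie in its symplectic orthogonal, which is the (Lagrangian) fibre itself, while $e_3(0)$ is by definition a combination of the $\vec\alpha_i$. Consequently $\alpha$ is conjugate at time $t$ precisely when the transported space $\mathrm{span}\{e_1(t),e_2(t),e_3(t)\}$ meets $V_\alpha$, i.e. when the matrix $B(t)$ of $f_i(0)$-components of $e_1(t),e_2(t),e_3(t)$ is singular. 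Because $R^{11}_0,R^{22}_0$ are constant the structural system has constant coefficients: $f_2,f_3$ are constant, $e_3(t)$ is affine in $t$, and $e_1$ solves $\ddot e_1+\rho e_1=-f_2(0)$. A short computation then gives
\[
\det B(t)=\frac{t}{\rho^2}\bigl(2(1-\cos\sqrt\rho\,t)-\sqrt\rho\,t\sin\sqrt\rho\,t\bigr),
\]
and the factorization $2(1-\cos s)-s\sin s=2\sin\frac s2\bigl(2\sin\frac s2-s\cos\frac s2\bigr)$ shows the first positive zero is $\sqrt\rho\,t=2\pi$ when $\rho>0$. For $\rho\le 0$, writing $\rho=-\lambda^2$, the bracket becomes $2(1-\cosh\lambda t)+\lambda t\sinh\lambda t$, whose vanishing would require $\tanh\frac{\lambda t}{2}=\frac{\lambda t}{2}$ and hence never occurs for $t>0$. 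Thus conjugate points exist only in the elliptic regime $\rho>0$; since the geodesics are parametrized on $[0,1]$, a conjugate endpoint means $\sqrt\rho=2\pi$, which is the asserted identity $\sqrt{|h_0^2-2c^2H|}=2\pi$.

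For (ii) I would invoke \cite{BoRo}. By the computation above the parabolic ($\rho=0$) and hyperbolic ($\rho<0$) geodesics carry no conjugate points, so any cut point of such a geodesic would be a Maxwell (symmetry) point, which is never conjugate. The point of the hypothesis $\sqrt{2H}\le R=\frac{2\sqrt2\,\pi}{c}$, equivalently $2c^2H\le 8\pi^2$, is precisely to stay below the length at which such Maxwell cut points first appear: translating the explicit cut-locus description of \cite{BoRo} into the invariants $h_0$ and $H$, one checks that for covectors with $\sqrt{2H}\le R$ the cut time equals the first conjugate time, so every cut point in $\Omega^R$ is conjugate. Combined with (i) this yields both conclusions.

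I expect the main obstacle to be part (ii). The conjugate-point computation in (i) is essentially forced by the constant-coefficient structural equations and is routine once $V_\alpha=\mathrm{span}\{e_1(0),e_2(0),e_3(0)\}$ is verified; by contrast, proving that no earlier Maxwell point intrudes for $\sqrt{2H}\le \frac{2\sqrt2\,\pi}{c}$ rests on the delicate global analysis of the $SL(2)$ cut locus in \cite{BoRo} and on a careful matching of their normalization with the present $(h_0,H,c)$. A secondary subtlety worth stating explicitly is that the absolute value in the conclusion is only apparent, the conjugate regime being exactly $\rho=h_0^2-2c^2H>0$.
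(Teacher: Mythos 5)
Your part (i) is sound and is consistent with machinery the paper already has in place: with $a=0$ and $\kappa=-c^2$ the structural equations of Theorem \ref{structural} have constant coefficients along each extremal, and the determinant you compute is (up to normalization) the quantity appearing in (\ref{btk1})--(\ref{btk3}) with $k=-c^2$, whose first positive zero in the regime $h_0^2-2c^2H>0$ occurs at $\tau_t=2\pi$ and which never vanishes when $h_0^2-2c^2H\le 0$; this correctly yields that a conjugate endpoint forces $\sqrt{h_0^2-2c^2H}=2\pi$, and your remark that the absolute value in the statement is redundant is right. The genuine gap is in part (ii), where the entire quantitative content of the theorem sits and where you write ``one checks'' and defer to \cite{BoRo}. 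The claim you need --- that for $\sqrt{2H}\le \frac{2\sqrt 2\,\pi}{c}$ the cut time equals the first conjugate time --- cannot simply be read off from \cite{BoRo}; the paper itself says the theorem follows from ``the result in \cite{BoRo} \emph{and a computation},'' and that computation is the proof. Concretely, the paper extracts from the proof of \cite[Theorem 5]{BoRo} the three transcendental equations (\ref{tan1})--(\ref{tan3}) characterizing the non-conjugate (Maxwell) cut points according to the sign of $\tau=h_0^2-2Hc^2$, and then minimizes $2Hc^2$ over their solution sets: writing $f(x)=\tan(\sqrt x)/\sqrt x$ and $g(x)=\tanh(\sqrt x)/\sqrt x$ with inverse branches $F_1<F_2$ of $f$ and $G$ of $g$, the problem becomes minimizing $4(F_1+G)$ (case $\tau<0$) and $4(F_2-F_1)$ (case $\tau>0$); the derivative identities (\ref{F1G}) locate the only interior critical point at $x=1$, and on the $\tau>0$ branch the infimum is approached only as $x\to\infty$, giving $4\bigl((3\pi/2)^2-(\pi/2)^2\bigr)=8\pi^2$. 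It is exactly this number that shows no Maxwell cut point satisfies $2Hc^2\le 8\pi^2=(Rc)^2$, i.e.\ none lies in $\Omega^R$.

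Without that branch analysis your argument is circular at the decisive step: you assume that the radius $R=\frac{2\sqrt 2\,\pi}{c}$ ``stays below the length at which such Maxwell cut points first appear,'' but identifying that threshold as $8\pi^2$ is precisely the assertion to be proved, and it is where the delicate work lies (branch bookkeeping for the inverses of $\tan(\sqrt x)/\sqrt x$, the critical-point computation, and the asymptotic evaluation of the infimum). You correctly flagged this as the main obstacle, but flagging it does not close it; as written, the proposal establishes only the conjugate-time formula of part (i), while the conjugacy of all cut points in $\Omega^R$ --- the heart of Theorem \ref{SL2cut} --- remains unproved.
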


\begin{proof}
Let $\tau=h_0^2-2Hc^2$. From the proof of \cite[Section 5.3, Theorem 15]{BoRo}, $\alpha$ in the cotangent space $T^*_xSL(2)$ at a point $x$ is a cut point but not a conjugate point only if it satisfies

\begin{equation}\label{tan1}
\frac{\tan(h_0(\alpha)/2)}{h_0(\alpha)}=\frac{\tanh(\sqrt{-\tau(\alpha)}/2)}{\sqrt{-\tau(\alpha)}}
\end{equation}
for $\tau(\alpha)<0$,
\begin{equation}\label{tan2}
\frac{\tan(h_0(\alpha)/2)}{h_0(\alpha)}=\frac{\tan(\sqrt{\tau(\alpha)}/2)}{\sqrt{\tau(\alpha)}}
\end{equation}
for $\tau(\alpha)>0$, or
\begin{equation}\label{tan3}
\frac{\tan(h_0(\alpha)/2)}{h_0(\alpha)}=\frac{1}{2}
\end{equation}
for $\tau(\alpha)=0$.

Let $r_1,r_2,r_3$ be the infimum of $2H(\alpha)c^2$ where $\alpha$ runs over positive solutions of (\ref{tan1}), (\ref{tan2}),(\ref{tan3}), respectively. The goal is to find the minimum of $\{r_1,r_2,r_3\}$.

Let $f(x)=\frac{\tan(\sqrt x)}{\sqrt x}$ and $g(x)=\frac{\tanh(\sqrt x)}{\sqrt x}$. Let $F_1$ be a branch of inverses of $f\Big|_{[0,\infty)}$ and let $G$ be the inverse of $g\Big|_{[0,\infty)}$. Finding $r_1$ is the same as minimizing $4(F_1+G)$.

A computation shows that that the derivatives of $F_1$ and $G$ satisfy
\begin{equation}\label{F1G}
F_1'(x)=\frac{1+x^2F_1(x)-x}{2F_1(x)}, \quad G'(x)=\frac{1-x^2G(x)-x}{2G(x)}.
\end{equation}
Since $F_1$ and $G$ are nonnegative, $F_1'+G'=0$ implies that $x=1$. It follows that $r_1\geq r_3$.

Let $F_2$ be another branch of inverses of $f\Big|_{[0,\infty)}$ for which $F_2>F_1$. We can assume that $F_1$ is the smallest branch and $F_2$ is the second smallest branch. In this case, finding $r_2$ is the same as minimizing $4(F_2-F_1)$. It follows from (\ref{F1G}) that $F_2'(x)-F_1'(x)=0$ implies $x=1$. Therefore, there are two possibilities. Either the minimum of $F_2-F_1$ occurs at $x=1$ which implies that $r_2\geq r_3$ or $4(F_2-F_1)$ goes to the infimum as $x\to\infty$ which implies that $r_2=4\left(\left(\frac{3\pi}{2}\right)^2-\left(\frac{\pi}{2}\right)^2\right)=8\pi^2<r_3$.

The last assertion follows from \cite{BoRo}.
\end{proof}

\smallskip

\section{Volume of Subriemannian Balls}\label{volestimate}

In this section, we give an estimate on the volume of subriemannian balls of Sasakian manifolds assuming the Tanaka-Webster curvature is bounded below. More precisely, let us fix a point $x$ in the manifold and let $v_1,v_2$ be an orthonormal basis of the subriemannian metric around $x$. Let $v_0$ be the Reeb field and $\alpha_0,\alpha_1,\alpha_2$ be the dual coframe of the frame $v_0,v_1,v_2$ (i.e. $\alpha_i(v_j)=\delta_{ij}$). We use this coframe to introduce coordinates on the cotangent space $T_x^*M$ and let $\mathfrak m$ be the corresponding volume form. We will also denote the corresponding Lebesgue measure by the same symbol. Let $(r,\theta,h)$ be the cylindrical coordinates on $T^*_xM$ corresponding to the above coordinate system (i.e. $h=h_0(\alpha)$, $r^2=2H(\alpha)$, and $\tan(\theta)=\frac{h_2(\alpha)}{h_1(\alpha)}$). Recall that $\Omega_R$ denotes the set of all covectors $\alpha$ such that $\sqrt{2H(\alpha)}\leq R$ and the curve $t\mapsto \pi(e^{t\vec H}(\alpha)), 0\leq t\leq 1$ is length minimizing. We use the coordinate system introduced above on $T^*_xM$ to identify the set $\Omega_R$ with a subset in $\Real^n$. Finally, let $\eta$ be the volume form defined by the condition $\eta(v_0,v_1,v_2)=1$. We denote the measure induced by $\eta$ using the same symbol.

\begin{thm}\label{volgrowth}
Assume that there exists a constant $k_1$ (resp. $k_2$) such that the Tanaka-Webster curvature $\kappa$ of a three dimensional Sasakian manifold satisfies $\kappa\geq k_1$ (resp. $\leq k_2$) on the ball $B(x,R)$ of radius $R$ centered at the point $x$. Then
\[
\eta(B(x,R))\leq \int_{\Omega_R}b^{k_1}d\mathfrak m \quad \left(\text{resp.} \geq \int_{\Omega_R}b^{k_2}d\mathfrak m\right ),
\]
where $b_k:T^*_xM\to\Real$ is defined via the above mentioned cylindrical coordinates by
\[
b^k=
\begin{cases}
\frac{r^2(2-2\cos(\tau)-\tau\sin(\tau))}{\sigma^2} & \text{if}\quad  \sigma>0,\\
\frac{r^2(2-2\cosh(\tau)+\tau\sinh(\tau)))}{\sigma^2} & \text{if}\quad \sigma <0,\\
\frac{r^2}{12} & \text{if} \quad \sigma=0,
\end{cases}
\]
$\sigma=h^2+r^2k$, and $\tau=\sqrt{|\sigma|}$.
\end{thm}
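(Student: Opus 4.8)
The plan is to express the volume $\eta(B(x,R))$ as an integral over the injectivity domain $\Omega_R$ by pulling back the volume form $\eta$ through the subriemannian exponential map. Specifically, the map $E:\Omega_R\to M$ defined by $E(\alpha)=\pi(e^{\vec H}(\alpha))$ parametrizes $B(x,R)$ (up to a measure-zero set of cut points), so that
\begin{equation}\label{planpullback}
\eta(B(x,R))=\int_{\Omega_R}E^*\eta=\int_{\Omega_R}|\det DE(\alpha)|\,d\mathfrak m(\alpha).
\end{equation}
The entire theorem then reduces to estimating the Jacobian $|\det DE|$ from above (resp.\ below) by the explicit function $b^{k_1}$ (resp.\ $b^{k_2}$). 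The natural tool for controlling this Jacobian is the moving frame and the structural equations of Theorem \ref{structural}, since the differential $DE$ is governed by how the Jacobi-type fields $e_i(t),f_i(t)$ evolve along the Hamiltonian flow.

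First I would set up the Jacobian computation concretely in the cylindrical coordinates $(r,\theta,h)$. The differential of the exponential map applied to coordinate vector fields can be read off from the solution of the structural ODE system, where the curvature functions $R^{11}_t,R^{22}_t$ enter as coefficients. In the Sasakian case we have $a=dh_0(\vec H)=0$, which forces substantial simplification: the formula for $R^{11}_0$ collapses to $R^{11}_0=h_0^2+2H\kappa$, and the term $R^{22}_0$ (which involves $a$ and its derivatives) vanishes, so the structural system decouples into a closed constant-coefficient block once we freeze the curvature at its value along the geodesic. Thus I expect the $(e_1,f_1,f_2)$ subsystem to reduce to a linear ODE with the single parameter $\sigma=h^2+r^2\kappa=R^{11}_0$, and solving it explicitly produces the trigonometric/hyperbolic expressions $2-2\cos\tau-\tau\sin\tau$ and $2-2\cosh\tau+\tau\sinh\tau$ with $\tau=\sqrt{|\sigma|}$, matching the definition of $b^k$.

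Next I would carry out the comparison. The point is that $\sigma$ depends monotonically on $\kappa$ through $\sigma=h^2+r^2\kappa$, and the Jacobian $|\det DE|$ depends monotonically on $\sigma$ (for fixed $r,h$) over the relevant range. Under the hypothesis $\kappa\geq k_1$, replacing the true (variable) curvature by the constant lower bound $k_1$ should decrease the Jacobian pointwise, yielding the upper bound $|\det DE|\leq b^{k_1}$; the reversed inequality gives the lower bound under $\kappa\leq k_2$. This monotonicity is really a Sturm-type comparison statement for the solutions of the structural ODE: larger curvature makes the Jacobi field vanish sooner and shrinks the volume element. Integrating over $\Omega_R$ and using \eqref{planpullback} then gives the claimed inequalities.

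The main obstacle I anticipate is the comparison step rather than the explicit model computation. When $\kappa$ is only bounded below and genuinely varies along the geodesic, the structural system no longer has constant coefficients, so I cannot simply solve it; I must instead argue that the solution of the variable-coefficient system is dominated by the solution of the constant-coefficient system with curvature $k_1$. Establishing the correct monotone dependence of the Jacobian on the curvature, with the right sign and over the full range of $(r,h)$ in $\Omega_R$, is delicate because the expression $b^k$ changes analytic form across $\sigma=0$ and because one must verify that no sign change or conjugate-point crossing spoils the comparison before $t=1$. A secondary technical point is justifying \eqref{planpullback}, namely that the cut locus has $\eta$-measure zero and that $E$ restricts to a diffeomorphism onto the interior of $B(x,R)$ away from the cut locus, which relies on the completeness assumption and the structure of $\Omega_R$.
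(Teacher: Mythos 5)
Your architecture tracks the paper's proof exactly: the volume is written as the integral of the exponential-map Jacobian over $\Omega_R$ (the paper's (\ref{balleqn}), where in the trivialization by the canonical Darboux frame of Theorem \ref{structural} the Jacobian is $\rho_1=2H(\alpha)|\det B_1|$, accounting for the $r^2$ factor in $b^k$), the Sasakian condition $a=0$ does collapse $R^{11}_0$ to $h_0^2+2H\kappa$ and kill $R^{22}$, and the constant-coefficient model does integrate to the stated trigonometric/hyperbolic expressions (the paper does this via \cite{Le}). But the step you yourself flag as ``the main obstacle'' is precisely where the proposal stops, and the mechanism you gesture at --- a ``Sturm-type comparison'' giving pointwise monotone dependence of the Jacobian on the curvature --- is not an off-the-shelf tool here. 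After the Sasakian reduction the relevant scalar equation is the degenerate fourth-order equation $y''''+(R^{11}_t\,y')'=0$ (equivalently the system (\ref{AB}) with nilpotent $C_1$, singular $C_2$, and rank-one $R_t$), not a classical second-order Jacobi equation, so scalar Sturm comparison does not apply; moreover $B_0=0$, so $\det B_t$ and $\det B^k_t$ both vanish at $t=0$ and there is no nondegenerate initial comparison from which a pointwise domination of determinants could propagate.

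What fills this gap in the paper is a specific three-part mechanism that your plan would need (or an equivalent substitute). First, pass to the matrix Riccati equation for $U_t=A_t^{-1}B_t$, which has the \emph{regular} initial condition $U_0=0$ (note $S_t=B_t^{-1}A_t$ is singular at $t=0$, so the comparison must be run on $U_t$), and invoke Royden's matrix Riccati comparison theorem \cite{Ro} to obtain the operator inequalities $U^{k_2}_t\leq U_t\leq U^{k_1}_t\leq 0$ of (\ref{Ucompare}), hence $S^{k_1}_t\leq S_t\leq S^{k_2}_t$. Second, convert the operator inequality into a determinant inequality via $\frac{d}{dt}\log\det B_t=-\mathrm{tr}(S_tC_2)$, which requires only the trace of $S_t$ against the singular matrix $C_2$ and shows that the ratio $\det B_t/\det B^{k_2}_t$ is nondecreasing (and $\det B_t/\det B^{k_1}_t$ correspondingly controlled). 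Third, resolve the singular initial time by the limit computation $\lim_{t\to 0}\det B_t/\det B^{k_2}_t\geq 1$, obtained through the factorization $\det B_t=\det A_t\det U_t$ and (\ref{Ucompare}). Only the combination yields $b^{k_2}_t\leq|\det B_t|\leq b^{k_1}_t$ and hence the theorem; without it, your assertion that replacing $\kappa$ by its constant bound ``should decrease the Jacobian pointwise'' is exactly the unproved crux, not a consequence of anything you have set up. (Your secondary point --- negligibility of the cut locus and injectivity of $\psi_1$ on the interior of $\Omega_R$ --- is treated at the same level of brevity in the paper itself, so no complaint there.)
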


As a corollary, we have a formula for the volume of subriemannian balls on Sasakian space forms. Remark that explicit formula for the set $\Omega_R$ in various examples are present in Section \ref{spaceform} (see also \cite{BoRo} for more details).

\begin{cor}\label{constantvol}
Assume that the three dimensional subriemannian manifold is a Sasakian space form with Tanaka-Webster curvature $k$. Then
\[
\eta(B(x,R))= \int_{\Omega_R}b^{k}d\mathfrak m.
\]
\end{cor}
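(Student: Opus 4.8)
The plan is to obtain the corollary directly from Theorem \ref{volgrowth} by a sandwiching argument, exploiting the fact that on a Sasakian space form the Tanaka-Webster curvature is by definition constant. Write $k$ for this constant value. Then on every ball $B(x,R)$ the lower bound hypothesis $\kappa \geq k_1$ of Theorem \ref{volgrowth} holds with $k_1 = k$, and at the same time the upper bound hypothesis $\kappa \leq k_2$ holds with $k_2 = k$. Note that the domain of integration $\Omega_R$ is defined purely in terms of the minimizing geodesics of the subriemannian structure and does not depend on any curvature bound, so it is literally the same set in both applications.

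First I would invoke the upper estimate of Theorem \ref{volgrowth} with $k_1 = k$, which yields
\[
\eta(B(x,R)) \leq \int_{\Omega_R} b^k \, d\mathfrak m.
\]
Next I would invoke the lower estimate of the same theorem with $k_2 = k$, which yields
\[
\eta(B(x,R)) \geq \int_{\Omega_R} b^k \, d\mathfrak m.
\]
Because the integrand $b^{k_1}$ (resp. $b^{k_2}$) is built from $\sigma = h^2 + r^2 k_i$, setting $k_1 = k_2 = k$ collapses both to the single function $b^k$. Hence the two inequalities share an identical right-hand side, and combining them forces the claimed equality.

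The only point requiring verification is that the defining property of a Sasakian space form, namely that $\kappa$ is constant equal to $k$, indeed makes both one-sided curvature bounds valid with the same constant on $B(x,R)$; this is immediate from the definition given at the start of Section \ref{spaceform}. Consequently there is no genuine obstacle: all the analytic content already resides in Theorem \ref{volgrowth}, and the corollary merely records that both directions of the volume comparison coincide once the curvature is pinched to a single value. One could alternatively argue directly, by checking that $b^k$ is exactly the Jacobian of the map $\alpha \mapsto \pi(e^{\vec H}(\alpha))$ in constant curvature $k$, but the sandwiching route is cleaner since it reuses the estimates already established rather than recomputing that Jacobian.
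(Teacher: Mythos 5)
Your proposal is correct and coincides with the paper's intended argument: the corollary is stated as an immediate consequence of Theorem \ref{volgrowth}, obtained exactly by applying both one-sided estimates with $k_1=k_2=k$ (valid since $\kappa\equiv k$ on a Sasakian space form) and noting that the two integrals over the curvature-independent set $\Omega_R$ coincide. Nothing further is needed.
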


\begin{proof}[Proof of Theorem \ref{volgrowth}]
Recall that $\eta$ is the measure on $M$ defined by $\eta(v_0,v_1,v_2)=1$. Let $\psi_t:T^*_xM\to M$ be the map $\psi_t(\alpha)=\pi(e^{t\cdot \vec H}(\alpha))$ and let $\rho_t:T^*_xM\to\Real$ be the function defined by
\begin{equation}\label{rho}
\psi_t^*\eta=\rho_t\mathfrak m.
\end{equation}

Let us fix a covector $\alpha$ in $T^*_xM$. Let $e_1(t),e_2(t),e_3(t),f_1(t),f_2(t),f_3(t)$ be a canonical Darboux frame at $\alpha$ defined by Theorem \ref{structural}. Let $a_{ij}(t)$ and $b_{ij}(t)$ be defined by
\begin{equation}\label{ei}
e_i(0)=\sum_{j=1}^3(a_{ij}(t)e_j(t)+b_{ij}(t)f_j(t)).
\end{equation}
Finally let $A_t$ and $B_t$ be the matrices with $(i,j)$-th entry equal to $a_{ij}(t)$ and $b_{ij}(t)$, respectively.

By definition of $\mathfrak m$, we have $\mathfrak m(e_1(0),e_2(0),e_3(0))=\frac{1}{\sqrt{2H(\alpha)}}$. It also follows from Theorem \ref{structural} that $\eta(f_1(0),f_2(0),f_3(0))=\sqrt{2H(\alpha)}$. Therefore, (\ref{rho}) implies that
\begin{equation}\label{measure}
\rho_t=2H(\alpha)\det B_t.
\end{equation}

It follows that from (\ref{measure}) that
\begin{equation}\label{balleqn}
\eta(B(x,r))=\int_{\psi_1(\Omega_r)}d\eta=\int_{\Omega_r}|\rho_1|d\mathfrak m=\int_{\Omega_r}2H(\alpha)|\det B_1|d\mathfrak m.
\end{equation}

Let $E_t=(e_1(t),e_2(t),e_3(t))^T$ and let $F_t=(f_1(t),f_2(t),f_3(t))^T$. Here the superscript $T$ denote matrix transpose. By the definition of the matrices $A_t$ and $B_t$, we have
\[
E_0=A_tE_t+B_tF_t.
\]

If we differentiate the above equation with respect to time $t$, we have
\[
\begin{split}
0&=\dot A_tE_t+A_t\dot E_t+\dot B_tF_t+B_t\dot F_t\\
&=\dot A_tE_t+A_tC_1E_t+A_tC_2F_t+\dot B_tF_t+B_t(-R_tE_t-C_1^TF_t),
\end{split}
\]

Since the manifold is Sasakian, $a=dh_0(\vec H)=0$. Therefore, by Theorem \ref{structural}, $E_t$ and $F_t$ satisfy the following equatoins
\[
\dot E_t=C_1E_t+C_2F_t,\quad F_t=-R_tE_t-C_1^TF_t,
\]
where
\[
C_1=\left(\begin{array}{ccc}
0 & 0 & 0\\
1 & 0 & 0\\
0 & 0 & 0
\end{array}\right), C_2=\left(\begin{array}{ccc}
1 & 0 & 0\\
0 & 0 & 0\\
0 & 0 & 1
\end{array}\right),
\]
\[
R_t=\left(\begin{array}{ccc}
R^{11}_t & 0 & 0\\
0 & 0 & 0\\
0 & 0 & 0
\end{array}\right)=\left(\begin{array}{ccc}
h_0^2(\alpha)+2\kappa_tH(\alpha) & 0 & 0\\
0 & 0 & 0\\
0 & 0 & 0
\end{array}\right),
\]
and $\kappa_t=\kappa(\psi_t(\alpha))$.

It follows that the matrices $A_t$ and $B_t$ satisfy the following equations
\begin{equation}\label{AB}
\dot A_t+A_tC_1-B_tR_t=0,\quad \dot B_t+A_tC_2-B_tC_1^T=0
\end{equation}
with initial conditions $B_0=0$ and $A_0=Id$.

If we set $S_t=B_t^{-1}A_t$ and $U_t=S_t^{-1}=A_t^{-1}B_t$, then they satisfy the following Riccati equations.
\[
\dot S_t-S_tC_2S_t+C_1^TS_t+SC_1-R_t=0
\]
and
\[
\dot U_t+U_tR_tU_t-C_1U_t-U_tC_1^T+C_2=0
\]
with initial condition $U_0=0$.

Let us fix a constant $k$ and consider the following Riccati equation with constant coefficients
\begin{equation}\label{new}
\dot U^k_t+U^k_tR^kU^k_t-C_1U_t^k-U_t^kC_1^T+C_2=0
\end{equation}
and initial condition $U^k_0=0$, where $R^k=\left(
\begin{array}{ccc}
h_0^2(\alpha)+2kH(\alpha) & 0 & 0 \\
0 & 0 & 0 \\
0 & 0 & 0	
\end{array}
\right)$.

The solution of (\ref{new}) can be found by the method in \cite{Le}. If $h_0^2+2H(\alpha)k>0$, then
\[
U^k_t=\left(
\begin{array}{ccc}
\frac{-t\sin(\tau_t)}{\tau_t\cos(\tau_t)} & \frac{t^2(\cos(\tau_t)-1)}{\tau_t^2\cos(\tau_t)} &  0\\
\frac{t^2(\cos(\tau_t)-1)}{\tau_t^2\cos(\tau_t)} & \frac{t^3(\tau_t\cos(\tau_t)-\sin(\tau_t))}{\tau_t^3\cos(\tau_t)} & 0\\
0 & 0 & -t
\end{array}\right).
\]

If $h_0^2+2H(\alpha)k<0$, then
\[
U^k_t=\left(
\begin{array}{ccc}
\frac{-t\sinh(\tau_t)}{\tau_t\cosh(\tau_t)} & \frac{t^2(1-\cosh(\tau_t))}{\tau_t^2\cosh(\tau_t)} &  0\\
\frac{t^2(1-\cosh(\tau_t))}{\tau_t^2\cosh(\tau_t)} & \frac{t^3(\sinh(\tau_t)-\tau_t\cosh(\tau_t))}{\tau_t^3\cosh(\tau_t)} & 0\\
0 & 0 & -t
\end{array}\right).
\]

If $h_0^2+2H(\alpha)k=0$, then
\[
U^k_t=\left(
\begin{array}{ccc}
-t & -\frac{t^2}{2} &  0\\
-\frac{t^2}{2} & -\frac{t^3}{3} & 0\\
0 & 0 & -t
\end{array}\right),
\]
where $\tau_t=t\sqrt{|h_0^2+2H(\alpha)k|}$.

If we call the inverse $S^k_t=(U^k_t)^{-1}$, then
\[
S^k_t=\left(
\begin{array}{ccc}
\frac{\tau_t(\tau_t\cos(\tau_t)-\sin(\tau_t))}{t(2-2\cos(\tau_t)-\tau_t\sin(\tau_t))} & \frac{\tau_t^2(1-\cos(\tau_t))}{t^2(2-2\cos(\tau_t)-\tau_t\sin(\tau_t))} & 0 \\
\frac{\tau_t^2(1-\cos(\tau_t))}{t^2(2-2\cos(\tau_t)-\tau_t\sin(\tau_t))} & \frac{-\tau_t^3\sin(\tau_t)}{t^3(2-2\cos(\tau_t)-\tau_t\sin(\tau_t))} & 0 \\
0 & 0 & -\frac{1}{t}
\end{array}\right),
\]
if $h_0^2+2H(\alpha)k>0$.

\[
S^k_t=\left(
\begin{array}{ccc}
\frac{\tau_t(\sinh(\tau_t)-\tau_t\cosh(\tau_t))}{t(2-2\cosh(\tau_t)+\tau_t\sinh(\tau_t))} & \frac{\tau_t^2(\cosh(\tau_t)-1)}{t^2(2-2\cosh(\tau_t)+\tau_t\sinh(\tau_t))} & 0 \\
\frac{\tau_t^2(\cosh(\tau_t)-1)}{t^2(2-2\cosh(\tau_t)+\tau_t\sinh(\tau_t))} & \frac{-\tau_t^3\sinh(\tau_t)}{t^3(2-2\cosh(\tau_t)+\tau_t\sinh(\tau_t))} & 0 \\
0 & 0 & -\frac{1}{t}
\end{array}\right),
\]
if $h_0^2+2H(\alpha)k<0$.

\[
S^k_t=\left(
\begin{array}{ccc}
-\frac{4}{t} & \frac{6}{t^2} & 0 \\
\frac{6}{t^2} & -\frac{12}{t^3} & 0 \\
0 & 0 & -\frac{1}{t}
\end{array}\right),
\]
if $h_0^2+2H(\alpha)k=0$.

By \cite{Ro}, if $k_1\leq\kappa\leq k_2$, then
\begin{equation}\label{Ucompare}
U^{k_2}_t\leq U_t\leq U^{k_1}_t\leq 0.
\end{equation}
Therefore, $S^{k_1}_t\leq S_t\leq S^{k_2}_t$.

On the other hand, by (\ref{AB}) and the definition of $S_t$, we have
\[
\dot B_t+B_t(S_tC_2-C_1^T)=0.
\]

It follows that $\frac{d}{dt}\det B_t=\text{tr}(C_1^T-S_tC_2)\det B_t=-\text{tr}(S_tC_2)\det B_t$. If we replace the matrix $R_t$ in (\ref{AB}) by $R^k$ and denote the solution by $A^k_t$ and $B^k_t$, then we have
\[
\frac{\frac{d}{dt}\det B_t}{\det B_t}=-\text{tr}(S_tC_2)\geq -\text{tr}(S_t^{k_2}C_2)=\frac{\frac{d}{dt}\det B_t^{k_2}}{\det B_t^{k_2}}.
\]

It follows that $\frac{\det B_t}{\det B_t^{k_2}}$ is nondecreasing. By definition of $U_t$ and (\ref{Ucompare}), we also have
\[
\lim_{t\to 0}\frac{\det B_t}{\det B_t^{k_2}}=\lim_{t\to 0}\frac{\det A_t\det U_t}{\det A_t^{k_2}\det U_t^{k_2}}\geq \lim_{t\to 0}\frac{\det A_t}{\det A_t^{k_2}}=1
\]

Therefore, it follows that
\[
\frac{\det B_t}{\det B_t^{k_2}}\geq \lim_{t\to 0}\frac{\det B_t}{\det B_t^{k_2}}\geq 1.
\]

Similarly, we also have
\[
\frac{\det B_t}{\det B_t^{k_1}}\leq  1.
\]

A calculation gives
\begin{equation}\label{btk1}
|\det B_t^k|:=b_t^k=\frac{t(2-2\cos(\tau_t)-\tau_t\sin(\tau_t))}{\tau_1^4}
\end{equation}
if $h_0^2+2H(\alpha)k>0$,
\begin{equation}\label{btk2}
b_t^k=\frac{t(2-2\cosh(\tau_t)+\tau_t\sinh(\tau_t))}{\tau_1^4}
\end{equation}
if $h_0^2+2H(\alpha)k<0$, and
\begin{equation}\label{btk3}
b_t^k=\frac{t^5}{12}
\end{equation}
if $h_0^2+2H(\alpha)k=0$.

It follows that $b_t^{k_2}\leq |\det B_t|\leq b_t^{k_1}$. Therefore, we have the following as claimed
\[
\int_{\Omega_R}r^2b_1^{k_2}d\mathfrak m\leq\eta(B(x,r))\leq \int_{\Omega_R}r^2b_1^{k_1}d\mathfrak m.
\]
\end{proof}

\smallskip

\section{Subriemannian Bishop Theorem}\label{subBishop}

In this section, we prove a subriemannian analog of Bishop theorem for three dimensional Sasakian manifolds. Recall that $\eta$ is the volume form defined by the condition $\eta(v_0,v_1,v_2)$. We denote the measure induced by $\eta$ using the same symbol and let $\eta^k$ be the corresponding measure in a Sasakian space form of curvature $k$. Let $B^k(R)$ be a subriemannian ball of radius $R$ in one of the Sasakian space forms $SU(2)$, $\mathbb H$, or $SL(2)$ of curvature $k$ (see Section \ref{spaceform} for a discussion of these space forms).

\begin{thm}(Subriemannian Bishop Theorem)\label{Bishop}
Assume that the Tanaka-Webster scalar curvature $\kappa$ of a three dimensional Sasakian manifold satisfies $\kappa\geq k$ on the ball $B(x,R)$ for some constant $k$. If $k\geq 0$, then
\begin{equation}\label{compareball}
\eta(B(x,R))\leq\eta^k(B^k(R))
\end{equation}
and equality holds only if $\kappa=k$ on $B(x,R)$. The same conclusion holds for $k<0$ provided that $R\leq \frac{2\sqrt 2\,\pi}{c}$.
\end{thm}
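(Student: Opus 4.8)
The plan is to sandwich $\eta(B(x,R))$ between the integral appearing in Theorem~\ref{volgrowth} and the exact model volume from Corollary~\ref{constantvol}. Setting $k_1=k$ in Theorem~\ref{volgrowth} gives $\eta(B(x,R))\le\int_{\Omega_R}b^k\,d\mathfrak{m}$, where $\Omega_R$ is the injectivity domain of $M$, while Corollary~\ref{constantvol} gives $\eta^k(B^k(R))=\int_{\Omega^k_R}b^k\,d\mathfrak{m}$, where $\Omega^k_R\subseteq T^*_xM$ denotes the injectivity domain of the relevant space form ($\mathbb H$, $SU(2)$, or $SL(2)$) written in the cylindrical coordinates $(r,\theta,h)$. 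Since both integrals use the \emph{same} density $b^k$ over the same coordinate model, inequality (\ref{compareball}) reduces to two facts: (i) $b^k\ge 0$ on $\Omega^k_R$, and (ii) the inclusion $\Omega_R\subseteq\Omega^k_R$ up to a set of $\mathfrak{m}$-measure zero. Indeed these give $\int_{\Omega_R}b^k\,d\mathfrak{m}\le\int_{\Omega^k_R}b^k\,d\mathfrak{m}$, and chaining the three relations yields the claim.

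Fact (i) is a direct verification: with $\tau=\sqrt{|\sigma|}$ and $\sigma=h^2+r^2k$, one checks $2-2\cos\tau-\tau\sin\tau\ge 0$ for $\tau\in[0,2\pi]$ and $2-2\cosh\tau+\tau\sinh\tau\ge 0$ for all $\tau\ge 0$; the bound $\tau\le 2\pi$ in the trigonometric case is exactly the condition cutting out $\Omega^k_R$ in (\ref{Hcut}), (\ref{SU2cut}), and Theorem~\ref{SL2cut}. The crux is Fact (ii), a conjugate-point comparison. For $\alpha\in\Omega_R$ the geodesic $t\mapsto\pi(e^{t\vec H}(\alpha))$ is minimizing on $[0,1]$, hence free of conjugate points in the interior, so its first conjugate time satisfies $t^M_{\mathrm{conj}}(\alpha)\ge 1$. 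On the other hand, the Riccati comparison from the proof of Theorem~\ref{volgrowth} (inequality (\ref{Ucompare}) with $k_1=k$, giving $S^k_t\le S_t$) shows that $\det B_t/\det B^k_t$ is nonincreasing and bounded by $1$; since this ratio cannot blow up, $\det B_t$ vanishes no later than $\det B^k_t$, i.e. $t^M_{\mathrm{conj}}(\alpha)\le t^k_{\mathrm{conj}}(\alpha)$. Combining, $t^k_{\mathrm{conj}}(\alpha)\ge 1$, so the model geodesic with the same $(r,\theta,h)$ has no interior conjugate point on $[0,1]$; because in each of these space forms every cut point is a conjugate point, that geodesic is minimizing on $[0,1]$, and since its length equals $r=\sqrt{2H}\le R$ we conclude $\alpha\in\Omega^k_R$.

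The main obstacle is the case $k<0$, where $\sigma=h^2+r^2k$ changes sign and the cut locus of $SL(2)$ is genuinely complicated, so that both the nonnegativity in Fact (i) and the identity ``cut $=$ conjugate'' can fail; this is precisely why the hypothesis $R\le\frac{2\sqrt 2\,\pi}{c}$ is needed. Theorem~\ref{SL2cut} guarantees that within $\Omega_R$ for $R\le\frac{2\sqrt2\,\pi}{c}$ every cut point is still conjugate and satisfies $\sqrt{|\sigma|}=2\pi$, which keeps the previous paragraph's argument valid and keeps $\tau=\sqrt{\sigma}\le 2\pi$ on the region where $\sigma>0$, so that $b^k\ge 0$ persists there. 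Finally, for the equality statement I trace back through the two inequalities: equality in (\ref{compareball}) forces equality in Theorem~\ref{volgrowth}, i.e. $|\det B_t|=b^k_t$ along almost every geodesic, which by the strictness of the Riccati comparison forces $S_t=S^k_t$, hence $R^{11}_t=h_0^2+2kH$, i.e. $\kappa_t=k$ along all geodesics issuing from $x$; as these sweep out $B(x,R)$, this yields $\kappa\equiv k$ on $B(x,R)$.
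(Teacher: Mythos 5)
Your proposal is correct and takes essentially the same route as the paper's proof: the sandwich via Theorem \ref{volgrowth} and Corollary \ref{constantvol}, the inclusion of $\Omega_R$ in the model injectivity domain deduced from the comparison $|\det B_t|\le b^k_t$ together with the cut-equals-conjugate facts (\ref{Hcut}), (\ref{SU2cut}) and Theorem \ref{SL2cut}, and the equality case via strictness of the Riccati comparison. Your conjugate-time formulation of the inclusion and your direct trace-back in the equality case are merely contrapositive restatements of the paper's contradiction arguments, not a different method.
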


\begin{proof}
Let us start with the proof of (\ref{compareball}). By (\ref{Hcut}), (\ref{SU2cut}), Theorem \ref{SL2cut}, and Corollary \ref{constantvol}, it is enough to show that $\Omega_R$ is contained in the set
\[
\{\alpha\in T^*_xM|\sqrt{|h_0(\alpha)^2+kH(\alpha)|}\leq 2\pi\}
\]

Suppose that there is a covector $\alpha$ in $\Omega_R$ such that
\[
\tau_1:=\sqrt{|h_0(\alpha)^2+kH(\alpha)|}> 2\pi.
\]
Using the notation in the proof of Theorem \ref{volgrowth}, we let
\[
E_0=A_tE_t+B_tF_t,
\]
where $E_t=(e_0(t),e_1(t),e_2(t))^T$ and $F_t=(f_0(t),f_1(t),f_2(t))^T$ are canonical Darboux frame at $\alpha$.

By the proof of Theorem \ref{volgrowth}, we have that $|\det B_t|\leq b_t^k$, where $b_t^k$ is defined in (\ref{btk1}), (\ref{btk2}), and (\ref{btk3}). Since $\tau_1>2\pi$, it follows that $\det B_t=0$ for some $t<1$. Therefore, $t\alpha$ is a conjugate point contradicting the fact that $\alpha$ is contained in $\Omega_R$.

Next, suppose equality holds in (\ref{compareball}) and $\kappa> k$ on an open set $\mathcal O$ contained in the ball $B(x,R)$. For each point $y$ in $\mathcal O$, let $\gamma(t)=\pi(e^{t\vec H}(\alpha))$ be a minimizing geodesic connecting $x$ and $y$. It follows that  $R_t>R_t^k$ for all $t$ close enough to $1$.  By the result in \cite{Ro} and a similar argument as in Theorem \ref{volgrowth}, we have $|\det B_1|<|\det B_1^{k}|$. It follows from (\ref{balleqn}) that $\eta(B(x,R))<\eta^k(B^k(R))$ which is a contradiction.
\end{proof}

\smallskip

\section{Subriemannian Hessian and Laplacian}\label{HessLaplace}

In this section, we introduce subriemannian versions of Hessian and Laplacian. For the computation, we will also give an expression for it in the canonical Darboux frame.

Assume that a functon $f:M\to\Real$ is twice differentiable at a point $x$ in the manifold $M$. The canonical Darboux frame
\[
\{e_1(t),e_2(t),e_3(t),f_1(t),f_2(t),f_3(t)\}
\]
at $df_x$ gives a splitting of the tangent space $T_{df_x}T^*M=\mathcal H\oplus\mathcal V$ defined by
\[
\mathcal H=\textbf{span}\{f_1(0),f_2(0),f_3(0)\}, \quad \mathcal V=\textbf{span}\{e_1(0),e_2(0),e_3(0)\}.
\]

The differential $d\pi$ of the projection map $\pi:T^*M\to M$ defines an identification between $\mathcal H$ and $T_xM$. On the other hand, the map $\iota:T_x^*M\to\mathcal V$ defined by $\iota(\alpha)=-\vec\alpha$ also gives an identification between $T^*_xM$ and $\mathcal V$.

Let us consider the differential $d(df_x):T_xM\to T_{df_x}T^*M$ at $x$ of the map $x\mapsto df_x$. It defines a three dimensional subspace $\Lambda:=d(df_x)(T_xM)$ of the tangent space $T_{df_x}T^*M$. Since $\Lambda$ is transvesal to the space $\mathcal V$, it defines a linear map $S$ from $\mathcal H$ to $\mathcal V$ for which the graph is given by $\Lambda$. More precisely, if $w=w_h+w_v$ is a vector in the space $\Lambda$, where $w_h$ and $w_v$ are in $\mathcal H$ and $\mathcal V$, respectivly. Then $S(w_h)=w_v$. Under the identifications of the tangent space $T_xM$ with $\mathcal H$ and the cotangent space $T^*_xM$ with $\mathcal V$, we obtain a linear map $H^{SR}f(x):T_xM\to T^*_xM$ called subriemannian Hessian. More precisely,
\[
H^{SR}f(x)(d\pi(w))=\iota^{-1}Sw.
\]

\begin{prop}\label{HessianSym}
The subriemannian Hessian $H^{SR}f$ is symmetric i.e. $\left<H^{SR}f(x)v,w\right>=\left<H^{SR}f(x)w,v\right>$.
\end{prop}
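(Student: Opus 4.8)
The plan is to reduce the symmetry of $H^{SR}f(x)$ to the single geometric fact that the graph of the exact one-form $df$ is a Lagrangian submanifold of $T^*M$, and to translate this through the identifications $d\pi$ and $\iota$ that enter the definition of the subriemannian Hessian.

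First I would record two structural facts about the splitting $T_{df_x}T^*M=\mathcal H\oplus\mathcal V$. Because $\{e_i(0),f_i(0)\}$ is a Darboux frame, the relations $\omega(f_i,f_j)=0$ and $\omega(e_i,e_j)=0$ show that both $\mathcal H=\textbf{span}\{f_i(0)\}$ and $\mathcal V=\textbf{span}\{e_i(0)\}$ are Lagrangian, while $\omega(f_i,e_j)=\delta_{ij}$ shows they are complementary. Moreover each $e_i(0)$ from Theorem \ref{structural} has the form $\vec\alpha$ for a pulled-back one-form, hence is vertical, so $\mathcal V=\ker d\pi$ is exactly the vertical subspace; this is what makes $d\pi\colon\mathcal H\to T_xM$ and $\iota\colon T^*_xM\to\mathcal V$ honest identifications.

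Next I would translate the duality pairing into the symplectic form. From the defining relation $i_{\vec\alpha}\omega=-\alpha$ (with $\alpha$ understood as its pullback by $\pi$) one obtains, for every $w\in T_{df_x}T^*M$, the identity $\omega(\vec\alpha,w)=-\alpha(d\pi(w))$. Writing $v=d\pi(w_1)$ and $w=d\pi(w_2)$ with $w_1,w_2\in\mathcal H$, and combining $H^{SR}f(x)(d\pi(w_i))=\iota^{-1}(Sw_i)$ with $\iota(\beta)=-\vec\beta$, a short sign-tracking computation gives
\[
\left<H^{SR}f(x)v,w\right>=\omega(Sw_1,w_2).
\]
Consequently the asserted symmetry is precisely the statement $\omega(Sw_1,w_2)=\omega(Sw_2,w_1)$ for all $w_1,w_2\in\mathcal H$.

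Finally I would recognize this last condition as the isotropy of the graph $\Lambda=\{w+Sw:w\in\mathcal H\}$: expanding $\omega(w_1+Sw_1,w_2+Sw_2)$ and using that $\mathcal H$ and $\mathcal V$ are Lagrangian annihilates the terms $\omega(w_1,w_2)$ and $\omega(Sw_1,Sw_2)$, leaving exactly $\omega(Sw_1,w_2)-\omega(Sw_2,w_1)$, which vanishes iff $\Lambda$ is isotropic. But $\Lambda=d(df_x)(T_xM)$ is the tangent space to the image of the section $df\colon M\to T^*M$, and for any one-form $\beta$ the pullback of the symplectic form along its graph is $\beta^*\omega=\beta^*d\theta=d\beta$; for $\beta=df$ this is $d(df)=0$, so the graph of $df$ is Lagrangian and $\Lambda$ is a Lagrangian subspace. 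This closes the argument. The only genuine obstacle is the bookkeeping in the middle step: getting the signs in $\iota$ and in $i_{\vec\alpha}\omega=-\alpha$ consistent so that $\left<H^{SR}f(x)v,w\right>$ comes out equal to $\omega(Sw_1,w_2)$ rather than its negative or its transpose; once that identity is nailed down, the Lagrangian property of $\Lambda$ supplies the symmetry with no further work.
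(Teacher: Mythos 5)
Your proof is correct and takes essentially the same route as the paper's: both reduce the symmetry to the identity $\omega(w_1,S(w_2))+\omega(S(w_1),w_2)=0$, obtained by expanding $\omega(w_1+S(w_1),w_2+S(w_2))=0$ and using that $\Lambda$, $\mathcal H$, and $\mathcal V$ are all Lagrangian. You additionally verify the facts the paper leaves implicit---that $\Lambda$ is Lagrangian because the graph of $df$ pulls $\omega$ back to $d(df)=0$, that $\mathcal V$ is the vertical subspace, and the sign bookkeeping giving $\left<H^{SR}f(x)d\pi(w_1),d\pi(w_2)\right>=\omega(S(w_1),w_2)$---which is harmless and consistent with the paper's later definition of $\mathfrak H_{ij}f$.
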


\begin{proof}
Let $w_1$ and $w_2$ be two vectors in the subspace $\mathcal H$. Since the subspace $\Lambda$ is a Lagrangian subspace, we have
\[
\omega(w_1+S(w_1),w_2+S(w_2))=0.
\]

Since both $\mathcal H$ and $\mathcal V$ are Lagrangian subspaces, we also have
\[
\omega(w_1,S(w_2))+\omega(S(w_1),w_2)=0.
\]

It follows from skew symmetry of $\omega$ and the definition of subriemannian Hessian that
\[
\begin{split}
&\left<H^{SR}f(x)d\pi(w_2),d\pi(w_1)\right>\\
&=\omega(w_1,S(w_2))\\
&=\omega(w_2,S(w_1))\\
&=\left<H^{SR}f(x)d\pi(w_1),d\pi(w_2)\right>.
\end{split}
\]
\end{proof}

Let $\iota_i=d\pi(f_i(0))$ and let $\mathfrak Hf(x)$ be the subriemannian Hessian matrix with $ij$-th entry $\mathfrak H_{ij}f(x)$ defined by
\[
\mathfrak H_{ij}f(x)=\left<H^{SR}f(x)\iota_i,\iota_j\right>=\omega(Sf_i(0),f_j(0)).
\]

Recall that $v_0$ denotes the Reeb field and $v_1,v_2$ be an orthonormal basis with respect to the subriemannian metric $g$.

\begin{prop}\label{Hessian}
The subriemannian Hessian matrix $\mathfrak Hf$ satisfies the following
\[
\begin{split}
\mathfrak H_{11}f&=\frac{(v_1f)^2v_2^2f+(v_2f)^2v_1^2f-(v_1f)(v_2f)(v_1v_2f+v_2v_1f)}{(v_1f)^2+(v_2f)^2}\\
&+a_{12}^1v_2f-a_{12}^2v_1f,\\
\mathfrak H_{12}f&=(v_1f)v_2v_0f-(v_2f)v_1v_0f+(v_0f)\mathfrak H_{13}f+\vec\xi_1a(df), \\
\mathfrak H_{13}f&=\frac{(v_1f)(v_2f)(v_1^2f-v_2^2f)-(v_1f)^2(v_2v_1f)+(v_2f)^2(v_1v_2f)}{(v_1f)^2+(v_2f)^2},\\
\mathfrak H_{22}f&=((v_1f)^2+(v_2f)^2)v_0^2f-(v_0f)(v_1f)v_1v_0f-(v_0f)(v_2f)v_2v_0f\\
&+v_0f(\mathfrak H_{23}f+a(df))-\chi_1(df)\\
\mathfrak H_{23}f&=(v_0f)\mathfrak H_{33}f-(v_1f)v_0v_1f-(v_2f)v_0v_2f,\\
\mathfrak H_{33}f&=\frac{(v_1f)^2v_1^2f+(v_1f)(v_2f)(v_2v_1f+v_1v_2f)+(v_2f)^2v_2^2f}{(v_1f)^2+(v_2f)^2}.
\end{split}
\]
\end{prop}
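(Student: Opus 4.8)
The plan is to reduce each matrix entry to a single symplectic pairing that can be read off directly from Theorem \ref{structural}. Write $s:M\to T^*M$ for the section $s(x)=df_x$, so that $\Lambda=ds_x(T_xM)$ and $\pi\circ s=\mathrm{id}$. For each $i$ set $W_i:=ds_x(\iota_i)$, where $\iota_i=d\pi(f_i(0))$. Applying $d\pi$ gives $d\pi(W_i)=\iota_i=d\pi(f_i(0))$, and since $d\pi$ restricts to an isomorphism $\mathcal{H}\to T_xM$ with kernel $\mathcal{V}$, the $\mathcal{H}$-component of $W_i$ in the splitting $\mathcal{H}\oplus\mathcal{V}$ is exactly $f_i(0)$. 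Hence $S(f_i(0))=W_i-f_i(0)$, and because $\mathcal{H}$ is Lagrangian the defining formula collapses to
\[
\mathfrak H_{ij}f=\omega(Sf_i(0),f_j(0))=\omega(W_i,f_j(0)).
\]
This identity is what I would take as the starting point; it is computable once $\iota_i$ and the $1$-forms $i_{f_j(0)}\omega$ are known, and its symmetry is already guaranteed by Proposition \ref{HessianSym}.

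Second, I would compute the projections $\iota_i=d\pi(f_i(0))$. The only ingredient is that, among the frame vectors of Theorem \ref{structural}, the fields $\vec\alpha_i,\vec\xi_i,\vec\sigma$ are vertical (each is killed by $d\pi$, since the corresponding forms $\alpha_i,\xi_i,\sigma$ are semibasic), while $d\pi(\vec h_i)=v_i$ and $d\pi(\vec H)=h_1v_1+h_2v_2$. Reading off the expressions for $f_1(0),f_2(0),f_3(0)$ and evaluating at $\alpha=df_x$ (so that $h_i=v_if$ and $2H=(v_1f)^2+(v_2f)^2$) yields $\iota_1,\iota_2,\iota_3$ as explicit combinations of $v_0,v_1,v_2$; for instance $\iota_1=\frac{1}{\sqrt{2H}}((v_1f)v_2-(v_2f)v_1)$.

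Third, I would turn the pairing into a pullback of forms. Writing $\omega(W_i,f_j(0))=-\big(i_{f_j(0)}\omega\big)(W_i)=-\big[s^*(i_{f_j(0)}\omega)\big](\iota_i)$, the $1$-form $i_{f_j(0)}\omega$ is obtained termwise from the defining relations $i_{\vec h_i}\omega=-dh_i$, $i_{\vec\alpha_i}\omega=-\alpha_i$, $i_{\vec\xi_i}\omega=-\xi_i$, $i_{\vec H}\omega=-dH$ applied to the formulas for $f_j(0)$. The pullback is then controlled by the single relation $h_i\circ s=v_if$, which gives $s^*dh_i=d(v_if)$, $s^*dH=(v_1f)d(v_1f)+(v_2f)d(v_2f)$, $s^*\alpha_i=\alpha_i$, and $s^*\xi_1=(v_1f)\alpha_2-(v_2f)\alpha_1$, $s^*\xi_2=(v_1f)\alpha_1+(v_2f)\alpha_2$, while every functional coefficient ($\chi_0,\chi_1,a,\vec\xi_1a,h_{12},\dots$) is simply evaluated at $df$. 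Pairing the resulting $1$-form with $\iota_i$ produces the stated numerators: the derivative terms $d(v_kf)(\iota_i)$ unfold into the second-order expressions $v_a(v_bf)$ appearing in the proposition, and the $\xi$-terms collapse using identities such as $(v_1f)\alpha_2(\iota_1)-(v_2f)\alpha_1(\iota_1)=\sqrt{2H}$.

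Finally, I would reorganize the second-order terms. Each entry initially carries derivatives $v_a(v_bf)$ in a fixed order; to match the symmetric form of the proposition I would symmetrize using $v_a(v_bf)-v_b(v_af)=[v_a,v_b]f=\sum_k a_{ab}^k\,v_kf$ together with the identities (\ref{coeff}), which is precisely the mechanism producing the corrections $a_{12}^1v_2f-a_{12}^2v_1f$ in $\mathfrak H_{11}$ and the terms $\vec\xi_1a(df)$, $a(df)$, $\chi_1(df)$ in $\mathfrak H_{12}$ and $\mathfrak H_{22}$. The main obstacle is purely bookkeeping: correctly matching the frame quantities of Theorem \ref{structural}---in particular identifying $(\vec\xi_1h_{12})(df)$ with $a_{12}^1v_2f-a_{12}^2v_1f$ and tracking how $\chi_0,\chi_1,a,\vec\xi_1a$ survive the pullback---so that the many second-order terms collapse to the six displayed expressions. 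By symmetry only the six listed entries are needed, and each may be computed via whichever of $\omega(W_i,f_j(0))$ or $\omega(W_j,f_i(0))$ is more convenient.
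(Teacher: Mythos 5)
Your proposal is correct and follows essentially the same route as the paper's own proof: both reduce $\mathfrak H_{ij}f$ to a symplectic pairing of $d(df_x)(\iota_i)$ against the Darboux frame of Theorem \ref{structural} (the paper's expansion $d(df_x)(\iota_i)=f_i(0)+\sum_k a_{ik}e_k(0)$ is exactly your identity $S(f_i(0))=W_i-f_i(0)$), and then compute by pulling back along $x\mapsto df_x$ using $h_i\circ df=v_if$, $s^*\alpha_i=\alpha_i$, and the verticality of $\vec\alpha_i,\vec\xi_i,\vec\sigma$. Your write-up merely makes explicit the bookkeeping (the formulas for $\iota_i$, the identification $(\vec\xi_1 h_{12})(df)=a_{12}^1v_2f-a_{12}^2v_1f$, the bracket reorganizations) that the paper compresses into ``similar calculations give the rest of the entries.''
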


\begin{proof}
Let $A$ be the matrix with the $ij$-th entry $a_{ij}$ defined by
\[
d(df_x)(\iota_i)=f_i(0)+\sum_{k=1}^3a_{ik}e_k(0),
\]

By the definition of the linear map $S$, we have
\[
S(f_i(0))=\sum_{j=1}^3a_{ij}e_j(0).
\]

It follows that
\[
\mathfrak H_{ij}f(x)=\omega(S(f_i(0)),f_j(0))=-a_{ij}.
\]

Let us look at the case $a_{11}$. We have $2H(df)=(v_1f)^2+(v_2f)^2$. Since $\pi(df_x)=x$, we also have
\[
\pi^*\alpha_i(d(df_x))(\iota_1)=\alpha_i(\iota_1).
\]
Therefore, by Theorem \ref{structural}, we have
\[
\begin{split}
\mathfrak H_{11}f(x)&=-a_{11}\\
&=-\omega(f_1(0),d(df_x)(\iota_1))\\
&=\frac{(v_1f)^2v_2^2f+(v_2f)^2v_1^2f-(v_1f)(v_2f)(v_1v_2f+v_2v_1f)}{(v_1f)^2+(v_2f)^2}\\
&+a_{12}^1v_2-a_{12}^2v_1.
\end{split}
\]

Similar calculations give the rest of the entries of $A$.
\end{proof}

We define the horizontal gradient $\nabla_H$ by $g(\nabla_Hf,v)=df(v)$ for all vectors $v$ in the distribution $\Delta$. Recall that $\eta$ is the volume form defined by $\eta(v_0,v_1,v_2)=1$. The sub-Laplacian $\Delta_H$ is defined by $\Delta_Hf=\textbf{div}_\eta\nabla_H f$. Here $\textbf{div}_\eta$ denotes the divergence with respect to the volume form $\eta$. Let $C_2$ be the matrix defined by
\[
C_2=\left(\begin{array}{ccc}
1 & 0 & 0\\
0 & 0 & 0\\
0 & 0 & 1
\end{array}\right).
\]

\begin{cor}\label{HessLaplaceRelate}
The subriemannian Hessian matrix $\mathfrak H$ and the sub-Laplacian satisfies
\[
\textbf{tr}(C_2\mathfrak Hf)=\Delta_Hf,
\]
where $\textbf{tr}$ denote the trace of the matrix.
\end{cor}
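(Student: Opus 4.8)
The plan is to compute both sides of the asserted identity explicitly in the orthonormal frame $v_1,v_2$ and check that they agree. Since $C_2=\mathrm{diag}(1,0,1)$, the left-hand side is simply $\textbf{tr}(C_2\mathfrak Hf)=\mathfrak H_{11}f+\mathfrak H_{33}f$, so the whole statement reduces to evaluating this sum of two diagonal Hessian entries and matching it against the sub-Laplacian.

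First I would add the formulas for $\mathfrak H_{11}f$ and $\mathfrak H_{33}f$ supplied by Proposition \ref{Hessian}. Writing $p=v_1f$ and $q=v_2f$, the two fractional terms share the common denominator $p^2+q^2$, and in the combined numerator the mixed terms $\pm pq(v_1v_2f+v_2v_1f)$ cancel while the remaining terms regroup as $(p^2+q^2)(v_1^2f+v_2^2f)$. Hence the fractions collapse and
\[
\mathfrak H_{11}f+\mathfrak H_{33}f=v_1^2f+v_2^2f+a_{12}^1\,v_2f-a_{12}^2\,v_1f.
\]

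Next I would compute the right-hand side directly from its definition. Since $v_1,v_2$ are $g$-orthonormal, the horizontal gradient is $\nabla_Hf=(v_1f)v_1+(v_2f)v_2$, so by the Leibniz rule for the divergence,
\[
\Delta_Hf=v_1^2f+v_2^2f+(v_1f)\,\textbf{div}_\eta v_1+(v_2f)\,\textbf{div}_\eta v_2.
\]
It remains to identify $\textbf{div}_\eta v_1$ and $\textbf{div}_\eta v_2$. Using $\eta(v_0,v_1,v_2)=1$ and the Lie-derivative formula $(\mathcal{L}_{v_i}\eta)(v_0,v_1,v_2)=-\sum_j\eta(v_0,\dots,[v_i,v_j],\dots,v_2)$, only the $v_j$-component of each bracket survives, giving $\textbf{div}_\eta v_i=-\sum_j a_{ij}^j$. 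Feeding in the bracket relations (\ref{bracket}) together with the identities (\ref{coeff})---in particular $a_{01}^0=a_{02}^0=0$, the vanishing of self-brackets $a_{ii}^i=0$, and the antisymmetry $a_{ij}^k=-a_{ji}^k$---yields $\textbf{div}_\eta v_1=-a_{12}^2$ and $\textbf{div}_\eta v_2=a_{12}^1$. Substituting these produces exactly the expression found above for $\mathfrak H_{11}f+\mathfrak H_{33}f$, which closes the argument.

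The computation is elementary; the only place demanding care is the evaluation of $\textbf{div}_\eta v_1$ and $\textbf{div}_\eta v_2$, where one must track signs through the antisymmetry of the structure functions $a_{ij}^k$ and correctly invoke the relations (\ref{coeff}). Everything else is bookkeeping with the two Hessian entries, so I expect no genuine obstacle beyond keeping the indices and signs straight.
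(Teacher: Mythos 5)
Your proposal is correct and follows essentially the same route as the paper: the paper's proof is precisely the computation $\textbf{tr}(C_2\mathfrak Hf)=\mathfrak H_{11}f+\mathfrak H_{33}f=(v_1^2+v_2^2+a_{12}^1v_2-a_{12}^2v_1)f=\Delta_Hf$ via Proposition \ref{Hessian}. The only difference is that you spell out the divergence computation $\textbf{div}_\eta v_1=-a_{12}^2$, $\textbf{div}_\eta v_2=a_{12}^1$ identifying the right-hand side with $\Delta_Hf$, which the paper leaves implicit; your signs and use of the relations (\ref{coeff}) check out.
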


\begin{proof}
A simple calculation using Proposition \ref{Hessian} shows that
\[
\textbf{tr}(C_2\mathfrak Hf)=(v_1^2+v_2^2+a_{12}^1v_2-a_{12}^2v_1)f=\Delta_Hf.
\]
\end{proof}

Let $d$ be the subriemannian distance function of a subriemannian manifold  $(M,\Delta,g)$. Let us fix a point $x_0$ in the manifold $M$. Let $r:M\to\Real$ be the function $r(x)=d(x,x_0)$ and let $\ds (x)=-\frac{1}{2}r^2(x)$. Finally, we show that the subriemannian Hessian of the function $\ds$ takes a very simple form.

\begin{prop}\label{Hessiands}
The subriemannian Hessian matrix $\mathfrak H\ds$ satisfies the following wherever $\ds$ is twice differentiable
\[
\begin{split}
\mathfrak H_{11}\ds&=\Delta_H\ds+1,\\
\mathfrak H_{12}\ds&=(v_1\ds)v_2v_0\ds-(v_2\ds)v_1v_0\ds+\vec\xi_1a(d\ds), \\
\mathfrak H_{13}\ds&=0,\\
\mathfrak H_{22}\ds&=-2\ds\, v_0^2\ds+(v_0\ds)^2-\chi_1(d\ds),\\
\mathfrak H_{23}\ds&=0,\\
\mathfrak H_{33}\ds&=-1.
\end{split}
\]

If we assume that the subriemannian manifold is Sasakian, then the above simplifies to
\[
\begin{split}
\mathfrak H_{11}\ds&=\Delta_H\ds+1,\\
\mathfrak H_{12}\ds&=(v_1\ds)v_2v_0\ds-(v_2\ds)v_1v_0\ds, \\
\mathfrak H_{13}\ds&=0,\\
\mathfrak H_{22}\ds&=-2\ds\, v_0^2\ds+(v_0\ds)^2,\\
\mathfrak H_{23}\ds&=0,\\
\mathfrak H_{33}\ds&=-1.
\end{split}
\]
\end{prop}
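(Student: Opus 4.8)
The plan is to substitute $f=\ds$ into the six formulas of Proposition \ref{Hessian} and to exploit the single extra fact that $\ds=-\tfrac12 r^2$ satisfies an eikonal (Hamilton--Jacobi) equation. First I would record that wherever $r$ is differentiable its horizontal gradient has unit length, so $2H(dr)=1$; since $H=\tfrac12(h_1^2+h_2^2)$ is fibrewise quadratic and $d\ds=-r\,dr$, homogeneity gives
\[
H(d\ds)=-\ds,\qquad\text{i.e.}\qquad (v_1\ds)^2+(v_2\ds)^2=-2\ds .
\]
Applying the derivations $v_1$, $v_2$ and $v_0$ to this relation produces the three first-order identities
\[
(v_1\ds)v_1^2\ds+(v_2\ds)v_1v_2\ds=-v_1\ds,\qquad
(v_1\ds)v_2v_1\ds+(v_2\ds)v_2^2\ds=-v_2\ds,
\]
\[
(v_1\ds)v_0v_1\ds+(v_2\ds)v_0v_2\ds=-v_0\ds ,
\]
which drive the entire computation. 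Throughout I abbreviate $p=v_1\ds$, $q=v_2\ds$, so that the common denominator $p^2+q^2$ occurring in Proposition \ref{Hessian} equals $-2\ds=r^2$.

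Next I would dispatch the five easy entries. Multiplying the first two identities by $p$ and $q$ respectively and adding shows that the numerator of $\mathfrak H_{33}\ds$ is $-(p^2+q^2)$, whence $\mathfrak H_{33}\ds=-1$; multiplying instead by $q$ and $p$ and subtracting reproduces exactly the numerator of $\mathfrak H_{13}\ds$, giving $\mathfrak H_{13}\ds=0$. The same two identities turn the quotient in $\mathfrak H_{11}\ds$ into $v_1^2\ds+v_2^2\ds+1$, and the residual term $a_{12}^1v_2\ds-a_{12}^2v_1\ds$ carried along by Proposition \ref{Hessian} combines with this, via Corollary \ref{HessLaplaceRelate}, into $\Delta_H\ds+1$. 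Since $\mathfrak H_{13}\ds=0$, the formula for $\mathfrak H_{12}\ds$ collapses immediately to the stated one, and the third (the $v_0$-)identity together with $\mathfrak H_{33}\ds=-1$ makes $\mathfrak H_{23}\ds$ vanish.

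The delicate entry is $\mathfrak H_{22}\ds$, and this is where the main obstacle lies. The term $(v_1\ds)v_1v_0\ds+(v_2\ds)v_2v_0\ds$ in Proposition \ref{Hessian} involves the ``wrong'' order of differentiation, so I would commute $v_iv_0=v_0v_i+[v_i,v_0]$ and use the $v_0$-identity to write
\[
(v_1\ds)v_1v_0\ds+(v_2\ds)v_2v_0\ds
=-v_0\ds+(v_1\ds)[v_1,v_0]\ds+(v_2\ds)[v_2,v_0]\ds .
\]
The decisive observation is that the bracket combination is precisely $a(d\ds)$. Indeed $a=dh_0(\vec H)=\vec H h_0=h_1\,\vec h_1 h_0+h_2\,\vec h_2 h_0$, and with the paper's convention $i_{\vec h}\omega=-dh$ the Hamiltonian lift $v\mapsto h_v$ is a Lie-algebra homomorphism, so $\vec h_i h_0=h_{[v_i,v_0]}$; evaluating at $\alpha=d\ds_x$ and using $h_i(d\ds_x)=v_i\ds$ gives $a(d\ds)=(v_1\ds)[v_1,v_0]\ds+(v_2\ds)[v_2,v_0]\ds$. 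Substituting back, together with $p^2+q^2=-2\ds$ and $\mathfrak H_{23}\ds=0$, makes the two occurrences of $a(d\ds)$ cancel and leaves $\mathfrak H_{22}\ds=-2\ds\,v_0^2\ds+(v_0\ds)^2-\chi_1(d\ds)$. This cancellation is the crux: it is the only place where the explicit shape of $a$ and the sign convention fixing $\vec H$ are needed.

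Finally, the Sasakian simplification is formal once the general formulas are in hand. The defining condition $a=dh_0(\vec H)\equiv0$ forces $\vec\xi_1a=0$ and, reading off the definition of $\chi_1$ from Theorem \ref{structural}, $\chi_1=h_0a+2\vec H\vec\xi_1a-\vec\xi_1\vec Ha=0$; hence the correction terms $\vec\xi_1a(d\ds)$ in $\mathfrak H_{12}\ds$ and $\chi_1(d\ds)$ in $\mathfrak H_{22}\ds$ vanish, yielding the stated simplified matrix.
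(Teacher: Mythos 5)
Your proposal is correct and follows essentially the same route as the paper: substitute $f=\ds$ into Proposition \ref{Hessian} and exploit the eikonal identity $(v_1\ds)^2+(v_2\ds)^2=-2\ds$ differentiated by $v_0$, $v_1$, $v_2$, with the Sasakian case following from $a=0$. The only difference is one of completeness: you make explicit the commutator identity $a(d\ds)=(v_1\ds)[v_1,v_0]\ds+(v_2\ds)[v_2,v_0]\ds$ (via $\vec h_ih_0=h_{[v_i,v_0]}$) that forces the cancellation in the $\mathfrak H_{22}$ entry, a step the paper's two-line proof leaves implicit but certainly relies on.
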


\begin{proof}
The first formula follows from differentiating the following equation by $v_0$, $v_1$, and $v_2$
\[
(v_1\ds)^2+(v_2\ds)^2=-2\ds
\]
and combining them with Proposition \ref{Hessian}.

The second follows from $a=0$.
\end{proof}

\smallskip

\section{Sub-Laplacian of Distance Functions in Sasakian Space Forms}\label{LaplaceSpaceForm}

In this section, we give a formula for the sub-Laplacian of the subriemannian distance function of a Sasakian space form. Let $d$ be the subriemannian distance function of a subriemannian manifold  $(M,\Delta,g)$. Let us fix a point $x_0$ in the manifold $M$. Let $r:M\to\Real$ be the function $r(x)=d(x,x_0)$ and let $\ds (x)=-\frac{1}{2}r^2(x)$.

\begin{thm}\label{HessExplicit}
Assume that the subriemannian manifold $(M,\Delta,g)$ is a three dimensional Sasakian space form of Tanaka-Webster curvature $k$. Then the subriemannian Hessian matrix $\mathfrak H\ds$ satisfies the following wherever $\ds$ is twice differentiable.
\[
\mathfrak H\ds(z) =
\begin{cases}
-\left(
\begin{array}{ccc}
\frac{\tau(\sin\tau-\tau\cos\tau)}{2-2\cos\tau-\tau\sin\tau} & \frac{\tau^2(1-\cos\tau)}{2-2\cos\tau-\tau\sin\tau} & 0 \\
\frac{\tau^2(1-\cos\tau)}{2-2\cos\tau-\tau\sin\tau} & \frac{\tau^3\sin\tau}{2-2\cos\tau-\tau\sin\tau} & 0 \\
0 & 0 & 1
\end{array}\right) & \text{if } \sigma(z)>0,\\
-\left(
\begin{array}{ccc}
\frac{\tau(\tau\cosh\tau-\sinh\tau)}{2-2\cosh\tau+\tau\sinh\tau} & \frac{\tau^2(\cosh\tau-1)}{2-2\cosh\tau+\tau_0\sinh\tau} & 0 \\
\frac{\tau^2(\cosh\tau-1)}{2-2\cosh\tau+\tau\sinh\tau} & \frac{\tau^3\sinh\tau}{2-2\cosh\tau+\tau\sinh\tau} & 0 \\
0 & 0 & 1
\end{array}\right) & \text{if } \sigma(z)<0,\\
-\left(
\begin{array}{ccc}
4 & 6 & 0 \\
6 & 12 & 0 \\
0 & 0 & 1
\end{array}\right) & \text{if } \sigma(z)=0,
\end{cases}
\]
where $\sigma(z)=(v_0\ds(z))^2-2\ds(z)k$ and $\tau(z)=\sqrt{|\sigma(z)|}$.
\end{thm}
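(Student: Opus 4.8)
The plan is to obtain $\mathfrak H\ds$ by specializing, to constant curvature, the very same Jacobi/Riccati data that controlled the volume in the proof of Theorem \ref{volgrowth}. First I would record what comes for free: since the space form is Sasakian, Proposition \ref{Hessiands} already gives $\mathfrak H_{13}\ds=\mathfrak H_{23}\ds=0$ and $\mathfrak H_{33}\ds=-1$, so the third row and column of the matrix are forced and only the upper $2\times2$ block $(\mathfrak H_{11}\ds,\mathfrak H_{12}\ds,\mathfrak H_{22}\ds)$ has to be computed. As a running consistency check I would keep in mind that $\mathfrak H_{11}\ds=\Delta_H\ds+1$ while $\Delta_H\ds=\textbf{tr}(C_2\mathfrak H\ds)=\mathfrak H_{11}\ds+\mathfrak H_{33}\ds$ by Corollary \ref{HessLaplaceRelate}, which forces $\mathfrak H_{33}\ds=-1$ to be compatible with whatever is found for $\mathfrak H_{11}\ds$.

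The key geometric step is to identify the Lagrangian subspace defining the Hessian. Recall that $\mathfrak H_{ij}\ds=\omega(Sf_i(0),f_j(0))$, where $S$ is the graph map of $\Lambda=d(d\ds_x)(T_xM)$. Because $\ds=-\frac{1}{2}r^2$ with $r$ the distance to the fixed centre $x_0$, the graph of $d\ds$ is precisely the Lagrangian swept out by the geodesics issuing from $x_0$; equivalently, $\Lambda$ is the image of the vertical fibre $T^*_{x_0}M$ under the linearized Hamiltonian flow along the geodesic joining $x_0$ to $x$. In the canonical Darboux frame of Theorem \ref{structural} this means $\Lambda$ is the transported vertical space $\textbf{span}\{e_1,e_2,e_3\}$, so writing $\Lambda$ as a graph over $\mathcal H=\textbf{span}\{f_1(0),f_2(0),f_3(0)\}$ reproduces exactly the matrix $S_t=B_t^{-1}A_t$ from the proof of Theorem \ref{volgrowth}, up to the orientation of the flow.

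Next I would invoke constant curvature. For a Sasakian space form $a=0$ and $\kappa\equiv k$, so $R^{11}_t=h_0^2+2Hk$ is constant along every geodesic and the relevant Riccati matrix is the constant-coefficient solution $S^k_t=(U^k_t)^{-1}$ of (\ref{new}), already written down explicitly in the proof of Theorem \ref{volgrowth}. I would then match $\mathfrak H\ds$ with $-S^k_{-1}$: the overall sign and the reflected evaluation time are dictated by $d\ds_x=-r\,dr_x$, which points back toward $x_0$ and, by the degree-two homogeneity of $H$, is carried onto the fibre over $x_0$ in unit flow time. Substituting $t=-1$ and using $\sigma=(v_0\ds)^2-2\ds k=h_0^2+2Hk$ and $\tau=\sqrt{|\sigma|}$, together with $2H=-2\ds=r^2$ and $(v_0\ds)^2=h_0^2$, the even/odd parities of $\sin,\cos,\sinh,\cosh$ collapse all dependence on $t$ and $\tau_t$ to $\tau$ and produce the three displayed matrices; the $\sigma=0$ case comes either from the $\sigma=0$ form of $S^k_t$ or as the limit $\tau\to0$, yielding the block with rows $(4,6,0)$, $(6,12,0)$, $(0,0,1)$.

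The main obstacle is exactly this identification in the middle step: rigorously pinning down that $\Lambda$ corresponds to the solution of the Riccati equation normalized by $U=0$ at the centre $x_0$, and fixing the overall sign and the reflected flow-time $t=-1$ rather than $t=+1$. This demands careful bookkeeping of the flow conventions (pullback versus pushforward in Theorem \ref{structural}), of the direction in which the geodesic is traversed, and of the homogeneity rescaling that normalizes the flow time to one. Once this normalization is secured, the remaining work is the purely mechanical substitution into the already-computed matrices $S^k_t$.
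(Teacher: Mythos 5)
Your proposal is correct and is essentially the paper's own argument: the paper likewise identifies $\Lambda=d(d\ds_z)(T_zM)$ with a Lagrangian subspace expanded in the canonical Darboux frame, derives the same constant-coefficient matrix Riccati equation for $S_t=B_t^{-1}A_t$ with $R$ having the single entry $\sigma=(v_0\ds)^2-2\ds k$, and fixes the normalization not by tracking flow orientations but by the one-line observation that $\pi(e^{1\cdot\vec H}(d\ds_x))=x_0$ for all $x$, which gives the terminal condition $\lim_{t\to 1}S_t^{-1}=0$, after which $\mathfrak H\ds=-S_0$ is read off from the explicit solution of \cite{Le} with $\tau_t=(1-t)\sqrt{|\sigma|}$. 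Your normalization ($U^k_0=0$ at the centre, evaluation at $t=-1$, parity collapse of the trigonometric factors) is exactly the time-translate $t\mapsto t-1$ of the paper's computation, so the ``main obstacle'' you flag is resolved in the paper by that single identity rather than by any delicate bookkeeping.
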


By combining Theorem \ref{HessExplicit} and Proposition \ref{HessLaplaceRelate}, we obtain the following.

\begin{cor}\label{LaplaceExplicit}
Let $d$ be the subriemannian distance function of a three dimensional Sasakian space form of Tanaka-Webster curvature $k$. Then the sub-Laplacian $\Delta_Hr$ satisfies the following wherever $r$ is twice differentiable.
\[
\Delta_H r(z)=
\begin{cases}
\frac{\tau(\sin\tau-\tau\cos\tau)}{r(2-2\cos\tau-\tau\sin\tau)} & \text{if } \sigma(z)>0,\\
\frac{\tau(\tau\cosh\tau-\sinh\tau)}{r(2-2\cosh\tau+\tau\sinh\tau)}  & \text{if } \sigma(z)<0,\\
\frac{4}{r} & \text{if } \sigma(z)=0,
\end{cases}
\]
where $\sigma(z)=r(z)^2((v_0r(z))^2+k)$ and $\tau(z)=\sqrt{|\sigma(z)|}$.
\end{cor}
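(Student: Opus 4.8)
The plan is to read off $\Delta_H r$ from the explicit Hessian of Theorem \ref{HessExplicit} by combining it with the trace identity $\textbf{tr}(C_2\mathfrak Hf)=\Delta_Hf$ of Corollary \ref{HessLaplaceRelate}, applied to the auxiliary function $\ds=-\frac12 r^2$. Since $C_2=\mathrm{diag}(1,0,1)$, taking the $C_2$-trace of any symmetric $3\times3$ matrix selects its $(1,1)$ and $(3,3)$ entries. Reading the matrix of Theorem \ref{HessExplicit} and noting that its $(3,3)$ entry is $-1$ in all three cases, Corollary \ref{HessLaplaceRelate} yields $\Delta_H\ds=\mathfrak H_{11}\ds-1$, where $\mathfrak H_{11}\ds$ is the $(1,1)$ entry written out in each case. (This is of course consistent with the identity $\mathfrak H_{11}\ds=\Delta_H\ds+1$ of Proposition \ref{Hessiands}.)

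Next I would convert the statement about $\ds$ into one about $r$ by the Leibniz rule for the horizontal gradient and divergence. From $\ds=-\frac12 r^2$ one gets $\nabla_H\ds=-r\,\nabla_H r$, and hence $\Delta_H\ds=\textbf{div}_\eta(-r\,\nabla_H r)=-r\,\Delta_H r-dr(\nabla_H r)$. The remaining term is $dr(\nabla_H r)=g(\nabla_H r,\nabla_H r)=(v_1 r)^2+(v_2 r)^2$, which equals $1$ by the subriemannian eikonal equation. This last fact is nothing but the relation $(v_1\ds)^2+(v_2\ds)^2=-2\ds$ used in the proof of Proposition \ref{Hessiands}, rewritten through $v_i\ds=-r\,v_i r$ and divided by $r^2$. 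Therefore $\Delta_H\ds=-r\,\Delta_H r-1$, and comparing with the previous paragraph gives $\mathfrak H_{11}\ds=-r\,\Delta_H r$, i.e. $\Delta_H r=-\mathfrak H_{11}\ds/r$.

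Finally I would substitute the explicit value of $\mathfrak H_{11}\ds$ from each of the three cases of Theorem \ref{HessExplicit} and simplify, which produces the three displayed formulas. The one point requiring a genuine check is the compatibility of the two definitions of $\sigma$: the argument $\sigma=(v_0\ds)^2-2\ds k$ appearing in Theorem \ref{HessExplicit} becomes, upon inserting $\ds=-\frac12 r^2$ and $v_0\ds=-r\,v_0 r$, exactly $r^2\bigl((v_0 r)^2+k\bigr)$, the $\sigma$ of Corollary \ref{LaplaceExplicit}; hence $\tau=\sqrt{|\sigma|}$ is literally the same quantity in both statements and the three cases align term by term. The formulas hold on the common twice-differentiability set, which is the complement of the cut locus.

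I do not expect a serious obstacle here, since this is a corollary of the earlier work. The only steps needing care are the Leibniz computation of $\textbf{div}_\eta(r\,\nabla_H r)$ and the verification of the eikonal identity $(v_1 r)^2+(v_2 r)^2=1$ on the differentiability set; both follow directly from relations already established, so the argument is essentially a bookkeeping exercise once the substitution $\ds=-\frac12 r^2$ is carried through.
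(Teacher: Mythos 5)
Your proposal is correct and follows exactly the route the paper intends: the paper states the corollary as an immediate consequence of combining Theorem \ref{HessExplicit} with the trace identity of Corollary \ref{HessLaplaceRelate}, and your computation (extracting $\mathfrak H_{11}\ds-1=\Delta_H\ds$, converting to $\Delta_H r$ via the Leibniz rule and the eikonal identity $(v_1r)^2+(v_2r)^2=1$, and checking that $\sigma=(v_0\ds)^2-2k\ds=r^2((v_0r)^2+k)$) is precisely the bookkeeping the paper leaves implicit, with the $\sigma$-compatibility already recorded in the proof of Theorem \ref{HessExplicit}. No gaps.
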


\begin{proof}[Proof of Theorem \ref{HessExplicit}]
Let $\varphi_t(x)=\pi(e^{t\vec H}(d\ds _x))$. Assume that $z$ is a point where $\ds$ is twice differentiable. Let $\Lambda$ be the image of the linear map $d((d\ds)_z):T_zM\to T_{d\ds_z}T^*M$. Let $E_t=(e_1(t),e_2(t),e_3(t))^T,F_t=(f_1(t),f_2(t),f_3(t))^T$ be a Darboux frame at $df_z$ and let $\iota_i=d\pi(f_i(0))$. Let $A_t$ and $B_t$ be the matrices with $ij$-th entry $a_{ij}(t)$ and $b_{ij}(t)$, respectively, defined by
\[
d(d\ds_y)(\iota_i)=\sum_{j=1}^3\left(a_{ij}(t)e_j(t)+b_{ij}(t)f_j(t)\right).
\]

We define the matrix $S_t$ by $S_t=B_t^{-1}A_t$. Since $\pi(e^{1\cdot \vec H}(d\ds_x))=x_0$ for all $x$, we have $\lim_{t\to 1}S_t^{-1}=0$. The same argument as in Theorem \ref{volgrowth} shows that
\[
\dot S_t-R+S_tC_1+C_1^TS_t-S_tC_2S_t=0,
\]
where $R=\left(
\begin{array}{ccc}
	\sigma & 0 & 0\\
	0 & 0 & 0\\
	0 & 0 & 0
	\end{array}\right)$ and $\sigma$ is defined by
\[
\sigma=h_0(d\ds_z)^2+2H(d\ds_z)k=(v_0\ds(z))^2-2\ds(z)k=r(z)^2((v_0r(z))^2+k).
\]

By the proof of Proposition \ref{Hessian} and $B_0=I$, we have $\mathfrak H\ds (z)=-A_0=-S_0$. By the result in \cite{Le}, we can compute $S_t$ and it is given by
\[
S_t=
\begin{cases}
\left(
\begin{array}{ccc}
\frac{\tau_0(\sin\tau_t-\tau_t\cos\tau_t)}{2-2\cos\tau_t-\tau_t\sin\tau_t} & \frac{\tau_0^2(1-\cos\tau_t)}{2-2\cos\tau_t-\tau_t\sin\tau_t} & 0 \\
\frac{\tau_0^2(1-\cos\tau_t)}{2-2\cos\tau_t-\tau_t\sin\tau_t} & \frac{\tau_0^3\sin\tau_t}{2-2\cos\tau_t-\tau_t\sin\tau_t} & 0 \\
0 & 0 & \frac{1}{1-t}
\end{array}\right) & \text{if } \sigma>0,\\
\left(
\begin{array}{ccc}
\frac{\tau_0(\tau_t\cosh\tau_t-\sinh\tau_t)}{2-2\cosh\tau_t+\tau_t\sinh\tau_t} & \frac{\tau_0^2(\cosh\tau_t-1)}{2-2\cosh\tau_t+\tau_t\sinh\tau_t} & 0 \\
\frac{\tau_0^2(\cosh\tau_t-1)}{2-2\cosh\tau_t+\tau_t\sinh\tau_t} & \frac{\tau_0^3\sinh\tau_t}{2-2\cosh\tau_t+\tau_t\sinh\tau_t} & 0 \\
0 & 0 & \frac{1}{1-t}
\end{array}\right) & \text{if } \sigma<0,\\
\left(
\begin{array}{ccc}
\frac{4}{1-t} & \frac{6}{(1-t)^2} & 0 \\
\frac{6}{(1-t)^2} & \frac{12}{(1-t)^3} & 0 \\
0 & 0 & \frac{1}{1-t}
\end{array}\right) & \text{if } \sigma=0,
\end{cases}
\]
where $\tau_t=(1-t)\sqrt{|\sigma|}$.

By setting $t=0$, we obtain the result.
\end{proof}

\smallskip

\section{Subriemannian Hessian and Laplacian Comparison Theorem}\label{subLaplace}

In this section, we prove a Hessian and a Laplacian comparison theorem in our subriemannian setting. Let $(M^1,\Delta^1,g^1)$ and $(M^2,\Delta^2,g^2)$ be three dimensional contact subriemannian manifolds. Let $x_0^i$ be a point on the manifold $M^i$, let $r_i$ be the subriemannian distance from the point $x_0^i$, and let $\ds_i=-\frac{1}{2}r_i^2$. Let $v_0^i$ be the Reeb field in $M^i$ and let $(R_t^{11})^i$ and $(R_t^{22})^i$ be the curvature invariant on $M^i$ introduced in section \ref{subriemannian}. Finally let
\[
R_t^i=\left(
\begin{array}{ccc}
(R_t^{11})^i & 0 & 0 \\
0 & (R_t^{22})^i & 0\\
0 & 0 & 0
\end{array}\right).
\]

\begin{thm}\label{HessianCompareI}(Subriemannian Hessian Comparison Theorem I)
Let $z_1$ and $z_2$ be points on the three dimensional contact subriemannian manifolds $M^1$ and $M^2$, respectively, such that $\ds_i$ is twice differentiable at $z_i$. Assume that $R_t^1\Big|_{(d\ds_1)_{z_1}}\leq R_t^2\Big|_{(d\ds_2)_{z_2}}$ for all $t$ in the interval $[0,1]$. Then
\[
\mathfrak H \ds_1(z_1)\leq \mathfrak H \ds_2(z_2).
\]
\end{thm}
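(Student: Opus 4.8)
The plan is to reduce the inequality between subriemannian Hessians to a comparison between solutions of matrix Riccati equations, and then invoke the monotonicity result from \cite{Ro} that was already used in the proof of Theorem \ref{volgrowth}. The key observation is the identification, established in the proof of Theorem \ref{HessExplicit}, that the subriemannian Hessian of $\ds_i$ at $z_i$ equals $-S_0^i$, where $S_t^i$ is the matrix $S_t = B_t^{-1}A_t$ built from the Darboux frame at $(d\ds_i)_{z_i}$, and where $S_t^i$ satisfies the Riccati equation
\[
\dot S_t^i - R_t^i + S_t^i C_1 + C_1^T S_t^i - S_t^i C_2 S_t^i = 0.
\]
The boundary condition is $\lim_{t\to 1}(S_t^i)^{-1} = 0$, coming from the fact that $\pi(e^{1\cdot\vec H}((d\ds_i)_{z_i})) = x_0^i$ for each $i$. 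So the entire statement becomes: two solutions of the same Riccati equation, with the same prescribed singular behaviour at $t=1$ but with different (ordered) curvature data $R_t^1 \leq R_t^2$, are themselves ordered at $t=0$.

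First I would set up both Riccati equations carefully, making sure the coefficient matrices $C_1$ and $C_2$ are identical across the two manifolds (they are universal, coming only from the structural equations of Theorem \ref{structural}, not from the geometry), so that the only difference between the two problems is the curvature term $R_t^i$. Then I would recall the comparison principle for matrix Riccati equations: if $\dot S^1 = \mathcal F(t, S^1)$ and $\dot S^2 = \mathcal F(t, S^2) + \Xi_t$ with $\Xi_t = R_t^2 - R_t^1 \geq 0$, and if the solutions share the appropriate terminal condition, then $S_t^1 \leq S_t^2$ on the whole interval. This is exactly the type of monotone dependence on the inhomogeneous term that \cite{Ro} supplies; it was invoked in the proof of Theorem \ref{volgrowth} to conclude $U_t^{k_2} \leq U_t \leq U_t^{k_1}$ from the ordering of the constant curvature matrices $R^k$, and the present situation is the same mechanism applied to genuinely $t$-dependent, non-constant curvature $R_t^i$.

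The one technical subtlety I would flag is the \emph{direction} of the comparison together with the \emph{terminal} rather than initial condition. In Theorem \ref{volgrowth} the Riccati variable $U_t$ was pinned at $t=0$ by $U_0 = 0$, whereas here $S_t^i$ is pinned at $t=1$ by $\lim_{t\to 1}(S_t^i)^{-1}=0$, i.e. $S_t^i$ blows up as $t\to 1$. I would therefore run the comparison backward in time from $t=1$: near the terminal time both solutions agree to leading order (their singular parts are dictated by the same universal $C_1,C_2$ and the identical boundary behaviour, independent of $R_t^i$), and a larger curvature term $R_t^2$ drives $S_t^2$ to be the larger solution as one integrates backward to $t=0$. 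Concretely, setting $D_t = S_t^2 - S_t^1$, one derives a linear-plus-quadratic evolution for $D_t$ whose inhomogeneous forcing is $R_t^2 - R_t^1 \geq 0$; the sign of $D_t$ is then preserved by the same argument as in \cite{Ro}. Evaluating at $t=0$ gives $S_0^1 \leq S_0^2$, and negating yields $\mathfrak H\ds_1(z_1) = -S_0^1 \geq -S_0^2$—so I must be careful with signs, since $\mathfrak H\ds_i = -S_0^i$ reverses the inequality, and the stated conclusion $\mathfrak H\ds_1(z_1)\leq \mathfrak H\ds_2(z_2)$ forces the ordering $S_0^1 \geq S_0^2$ rather than $S_0^1\leq S_0^2$. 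I expect reconciling this sign against the monotonicity convention of \cite{Ro} (whether the comparison result is stated for the Riccati solution or for its inverse $U_t = S_t^{-1}$, which reverses the order) to be the main point requiring care; once the correct convention is fixed, the result is immediate.
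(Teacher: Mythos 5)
Your proposal is correct and follows essentially the same route as the paper: the paper's proof likewise identifies $\mathfrak H\ds_i(z_i)=-S_0^i$ where $S_t^i=(B_t^i)^{-1}A_t^i$ solves $\dot S_t^i-R_t^i+S_t^iC_1+C_1^TS_t^i-S_t^iC_2S_t^i=0$ with the terminal blow-up condition $\lim_{t\to 1}(S_t^i)^{-1}=0$, and then concludes in one line by citing the Riccati comparison of \cite{Ro}. The sign question you flagged resolves the way the stated conclusion forces it to: with the singularity pinned at $t=1$, Royden's theorem gives that the \emph{larger} curvature produces the \emph{smaller} Riccati solution (exactly the convention used in the proof of Theorem \ref{HessianCompareII}, where $\kappa\geq k$, i.e. $R_t\geq R^k$, yields $S_t\leq S_t^k$), so $R_t^1\leq R_t^2$ gives $S_0^1\geq S_0^2$ and hence $\mathfrak H\ds_1(z_1)=-S_0^1\leq -S_0^2=\mathfrak H\ds_2(z_2)$, with no conflict to reconcile.
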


\begin{rem}
By the result in \cite{CaRi}, $\ds_i$ is twice differentiable Lebesgue almost everywhere.
\end{rem}

If we restrict to Sasakian manifolds, then we have the following.

\begin{thm}\label{HessianCompareII}(Subriemannian Hessian Comparison Theorem II)
Let $d$ be the subriemannian distance function of a Sasakian manifold $(M,\Delta,g)$ and let $\ds(x)=-\frac{1}{2}d^2(x,x_0)$, where $x_0$ is a point on $M$. Assume that the Tanaka-Webster curvature $\kappa$ of $M$ satisfies $\kappa\geq k$ (resp. $\kappa\leq k$). Then the following holds wherever $f$ is twice differentiable.
\[
\mathfrak H\ds(z)\geq
\begin{cases}
-\left(
\begin{array}{ccc}
\frac{\tau(\sin\tau-\tau\cos\tau)}{2-2\cos\tau-\tau\sin\tau} & \frac{\tau^2(1-\cos\tau)}{2-2\cos\tau-\tau\sin\tau} & 0 \\
\frac{\tau^2(1-\cos\tau)}{2-2\cos\tau-\tau\sin\tau} & \frac{\tau^3\sin\tau}{2-2\cos\tau-\tau\sin\tau} & 0 \\
0 & 0 & 1
\end{array}\right) & \text{if } \sigma(z)>0,\\
-\left(
\begin{array}{ccc}
\frac{\tau(\tau\cosh\tau-\sinh\tau)}{2-2\cosh\tau+\tau\sinh\tau} & \frac{\tau^2(\cosh\tau-1)}{2-2\cosh\tau+\tau\sinh\tau} & 0 \\
\frac{\tau^2(\cosh\tau-1)}{2-2\cosh\tau+\tau\sinh\tau} & \frac{\tau^3\sinh\tau}{2-2\cosh\tau+\tau\sinh\tau} & 0 \\
0 & 0 & 1
\end{array}\right) & \text{if } \sigma(z)<0,\\
-\left(
\begin{array}{ccc}
4 & 6 & 0 \\
6 & 12 & 0 \\
0 & 0 & 1
\end{array}\right) & \text{if } \sigma(z)=0,
\end{cases}
\]
(resp. $\leq$) where $\sigma=(v_0\ds)^2-2k\ds$ and $\tau=\sqrt{|\sigma|}$.
\end{thm}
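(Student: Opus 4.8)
The plan is to reduce Theorem \ref{HessianCompareII} to the general comparison principle of Theorem \ref{HessianCompareI}, using a Sasakian space form of curvature $k$ as the model and identifying the resulting model Hessian through Theorem \ref{HessExplicit}. First I would compute the curvature matrix $R_t$ along a minimizing geodesic $t\mapsto\pi(e^{t\vec H}\alpha)$ on $M$, where $\alpha=(d\ds)_z$. Since $M$ is Sasakian, $a=dh_0(\vec H)\equiv0$ on $T^*M$, so by Theorem \ref{structural} every term defining $R^{22}_0$ (each of which carries a factor of $a$ or a derivative of $a$) vanishes, and $R^{11}_0=h_0^2+2H\kappa$. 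Moreover $h_0$ is conserved along the flow because $\{h_0,H\}=a=0$, and $H$ is conserved as the Hamiltonian, so writing $\kappa_t=\kappa(\pi(e^{t\vec H}\alpha))$ I obtain
\[
R_t=\left(\begin{array}{ccc}
h_0(\alpha)^2+2H(\alpha)\kappa_t & 0 & 0\\
0 & 0 & 0\\
0 & 0 & 0
\end{array}\right),
\]
while the scalar $\sigma=h_0(\alpha)^2+2H(\alpha)k=(v_0\ds)^2-2k\ds$ is constant in $t$.

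Next, assuming $\kappa\geq k$, the minimizing geodesic from $x_0$ to $z$ stays in the region where $\kappa\geq k$, so the single nonzero entry obeys $h_0^2+2H\kappa_t\geq h_0^2+2Hk=\sigma$ for all $t\in[0,1]$; thus $R_t\geq R^k:=\mathrm{diag}(\sigma,0,0)$. I would take $M^2=M$ with $z_2=z$ and let $M^1$ be the space form of curvature $k$, choosing on it a covector whose conserved quantities produce the same value $\sigma$, so that $R^1_t\big|_{(d\ds_1)_{z_1}}=R^k\leq R_t=R^2_t\big|_{(d\ds_2)_{z_2}}$ throughout $[0,1]$. Theorem \ref{HessianCompareI} then gives $\mathfrak H\ds_1(z_1)\leq\mathfrak H\ds(z)$, and by Theorem \ref{HessExplicit} the left-hand side is exactly the displayed model matrix, since that matrix depends on $z_1$ only through $\sigma$. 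This yields the asserted lower bound; the case $\kappa\leq k$ is symmetric, taking $M^1=M$ and the space form as $M^2$ so that $R_t\leq R^k$ and the inequality reverses.

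The step I expect to be the main obstacle is the bookkeeping that lets the model enter only through the conserved scalar $\sigma$: Theorem \ref{HessianCompareI} compares Hessians at two genuinely distinct covectors, so I must exhibit, for each admissible value of $\sigma$ and each sign regime ($\sigma>0$, $\sigma<0$, $\sigma=0$), an actual point $z_1$ in a space form of curvature $k$ at which $\ds_1$ is twice differentiable and $h_0^2+2Hk=\sigma$ along its geodesic. Realizability must be checked against the constraint imposed by $k$ (for instance, when $k>0$ one has $\sigma\geq0$ automatically, matching the fact that the $\sigma<0$ branch cannot occur there), and one must confirm that the terminal blow-up normalization $\lim_{t\to1}S_t^{-1}=0$ used in Theorem \ref{HessExplicit} is the same normalization underlying Theorem \ref{HessianCompareI}, so that the two are compared with consistent boundary data. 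Once this matching is in place, the monotone dependence of the Hessian on $R_t$ supplied by Theorem \ref{HessianCompareI} closes the argument in both directions.
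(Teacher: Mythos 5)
Your strategy is sound and, at bottom, runs on the same two engines as the paper's proof: Royden's comparison theorem for matrix Riccati equations with the terminal normalization $\lim_{t\to 1}S_t^{-1}=0$ (which is the entire content of Theorem \ref{HessianCompareI}), and Levin's explicit solution of the constant-coefficient equation (which is the entire content of Theorem \ref{HessExplicit}). Your computation of the curvature along the geodesic is exactly the right geometric input: on a Sasakian manifold $a\equiv 0$ kills every term of $R^{22}_0$, leaves $R^{11}_t=h_0^2+2H\kappa_t$, and the conservation of $h_0$ and $H$ gives $R_t\geq R^k=\mathrm{diag}(\sigma,0,0)$ with $\sigma=(v_0\ds)^2-2k\ds$ constant in $t$. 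Where you diverge from the paper is the reduction: the paper never passes through an actual space form. Its proof applies \cite{Ro} directly to the Riccati equation $\dot S_t-R_t+S_tC_1+C_1^TS_t-S_tC_2S_t=0$ with $\lim_{t\to 1}S_t^{-1}=0$, comparing $S_t$ against the solution $S^k_t$ of the same equation with $R_t$ replaced by the constant matrix $R^k$, and then evaluates $S^k_0$ in closed form via \cite{Le}; the displayed model matrix is simply $-S^k_0$. No point $z_1$ in $SU(2)$, $\mathbb H$, or $SL(2)$ is ever exhibited or needed.

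This matters because the step you yourself flag as the main obstacle --- realizing each admissible value of $\sigma$ by a point of a genuine space form at which $\ds_1$ is twice differentiable --- is a real gap in your formulation, and for $k<0$ it demands more than the paper supplies. To place $z_1$ on $SL(2)$ you would need to know that a covector with prescribed $\sigma$ lies strictly inside the injectivity domain, but the paper's control of the $SL(2)$ cut locus (Theorem \ref{SL2cut}) is confined to $R\leq \frac{2\sqrt 2\,\pi}{c}$, whereas Theorem \ref{HessianCompareII} carries no such restriction --- note the contrast with the Bishop theorem, which does. Even for $k\geq 0$ you would have to rule out boundary values such as $\sigma=4\pi^2$ (cut--conjugate points of $SU(2)$ or $\mathbb H$, where $\ds_1$ fails to be twice differentiable), or else argue by continuity of the model matrix in $\sigma$ from a dense set of realizable values. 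All of this dissolves once you observe that the matrix in Theorem \ref{HessExplicit} is nothing but the $t=0$ value of the solution of the model Riccati equation with the terminal datum $\lim_{t\to 1}S_t^{-1}=0$ (the same normalization underlying Theorem \ref{HessianCompareI}, as you suspected): quoting \cite{Le} for that solution and \cite{Ro} for the comparison turns your argument into the paper's, with the realizability question never arising. So your proposal is correct in all its executed steps, but the repair for its one open step is to compare at the level of the ODE rather than at the level of model manifolds.
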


If we combine Theorem \ref{HessianCompareII} and Proposition \ref{HessLaplaceRelate}, then we have the following sub-Laplacian comparison theorem.

\begin{cor}(Sub-Laplacian Comparison Theorem) \label{LaplacianCompare}
Under the notation and assumptions of Theorem \ref{HessianCompareII}, the following holds wherever $r$ is twice differentiable.
\[
\Delta_H r(z)\leq
\begin{cases}
\frac{\tau(\sin\tau-\tau\cos\tau)}{r(2-2\cos\tau-\tau\sin\tau)} & \text{if } \sigma(z)>0,\\
\frac{\tau(\tau\cosh\tau-\sinh\tau)}{r(2-2\cosh\tau+\tau\sinh\tau)}  & \text{if } \sigma(z)<0,\\
\frac{4}{r} & \text{if } \sigma(z)=0,
\end{cases} \quad (\text{resp. }\geq)
\]
where $r(x)=d(x,x_0)$, $\sigma(z)=r(z)^2((v_0r(z))^2+k)$, and $\tau(z)=\sqrt{|\sigma(z)|}$.
\end{cor}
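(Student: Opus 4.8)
The plan is to deduce the scalar Laplacian estimate directly from the matrix Hessian estimate of Theorem \ref{HessianCompareII} by pairing that estimate against the matrix $C_2$, and then to rewrite the resulting inequality for $\ds$ as an inequality for $r$. Throughout I would work at a point $z$ where $r$ is twice differentiable; there $\ds=-\tfrac12 r^2$ is twice differentiable as well (since $r\neq 0$ away from $x_0$), so that both Theorem \ref{HessianCompareII} and Corollary \ref{HessLaplaceRelate} apply.

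First I would apply the pairing $M\mapsto \textbf{tr}(C_2 M)$ to the matrix inequality. Assuming $\kappa\geq k$, Theorem \ref{HessianCompareII} gives $\mathfrak H\ds\geq M^k$, where $M^k$ denotes the explicit matrix on the right-hand side of that theorem. Since $C_2=\mathrm{diag}(1,0,1)$ is positive semidefinite, and the diagonal entries of a positive semidefinite matrix are nonnegative, the difference $\mathfrak H\ds-M^k\geq 0$ has nonnegative $(1,1)$- and $(3,3)$-entries, so $\textbf{tr}(C_2\mathfrak H\ds)\geq \textbf{tr}(C_2 M^k)$. By Corollary \ref{HessLaplaceRelate} the left-hand side equals $\Delta_H\ds$, while the right-hand side is $(M^k)_{11}+(M^k)_{33}=(M^k)_{11}-1$, because every case of $M^k$ has $(3,3)$-entry $-1$. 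This reduces the claim to a statement involving only the entry $(M^k)_{11}$.

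Next I would convert the estimate for $\Delta_H\ds$ into one for $\Delta_H r$. From $\ds=-\tfrac12 r^2$ and the definition of the horizontal gradient one gets $\nabla_H\ds=-r\,\nabla_H r$, together with the eikonal identity $|\nabla_H r|^2=1$, which follows from $(v_1\ds)^2+(v_2\ds)^2=-2\ds=r^2$ (used already in the proof of Proposition \ref{Hessiands}) combined with $\nabla_H\ds=-r\nabla_H r$. The Leibniz rule $\textbf{div}_\eta(fX)=df(X)+f\,\textbf{div}_\eta X$ applied to $f=-r$, $X=\nabla_H r$ then yields $\Delta_H\ds=-|\nabla_H r|^2-r\,\Delta_H r=-1-r\,\Delta_H r$. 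Substituting this gives $-1-r\,\Delta_H r\geq (M^k)_{11}-1$, that is $r\,\Delta_H r\leq -(M^k)_{11}$, and dividing by $r>0$ reproduces exactly the stated bound once one reads off $-(M^k)_{11}$ in each regime $\sigma>0$, $\sigma<0$, $\sigma=0$. I would also record that the two expressions for $\sigma$ agree, since $(v_0\ds)^2-2k\ds=r^2(v_0r)^2+kr^2=r^2((v_0r)^2+k)$. The case $\kappa\leq k$ is identical with every inequality reversed.

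The argument is essentially bookkeeping, and the only genuine computation is the passage from $\Delta_H\ds$ to $\Delta_H r$. The point that must be handled with care is the \emph{direction} of the final inequality: it is flipped relative to the Hessian estimate precisely because the relation $\Delta_H\ds=-1-r\,\Delta_H r$ carries a minus sign. The one structural remark worth stating explicitly is that $\textbf{tr}(C_2\,\cdot\,)$ preserves the Loewner inequality even though $C_2$ is only positive semidefinite rather than positive definite, which is exactly the observation that the diagonal entries of a positive semidefinite matrix are nonnegative.
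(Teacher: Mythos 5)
Your proposal is correct and follows essentially the same route as the paper, which obtains the corollary precisely by combining the matrix inequality of Theorem \ref{HessianCompareII} with the trace identity $\textbf{tr}(C_2\mathfrak H f)=\Delta_H f$ of Corollary \ref{HessLaplaceRelate} and converting from $\ds=-\tfrac12 r^2$ to $r$ via $\Delta_H\ds=-1-r\,\Delta_H r$. You have merely made explicit the bookkeeping the paper leaves implicit (monotonicity of $\textbf{tr}(C_2\,\cdot\,)$ under the Loewner order, the eikonal identity $(v_1r)^2+(v_2r)^2=1$, and the agreement of the two expressions for $\sigma$), all of which is accurate.
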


\begin{proof}[Proof of Theorem \ref{HessianCompareI}]
Let $\varphi^i_t(x)=\pi(e^{t\vec H}((d\ds_i)_x))$ and let
\[
E_t^i=(e_1^i(t),e_2^i(t),e_3^i(t))^T, \quad F_t^i=(f_1^i(t),f_2^i(t),f_3^i(t))^T
\]
be a Darboux frame at $(d\ds_i)_{z_i}$ and let $\iota^i_j=d\pi(f_j(0))$. Let $A_t^i$ and $B_t^i$ be the matrices with $jk$-th entry $a_{jk}^i(t)$ and $b_{jk}^i(t)$, respectively, defind by
\[
d(d\ds_{z_i})(\iota_j)=\sum_{k=1}^3(a_{jk}^i(t)e_k^i(t)+b_{jk}^i(t)f_k^i(t)).
\]

We define the matrix $S_t^i$ by $(B_t^i)^{-1}A_t^i$. As in the proof of Theorem \ref{HessExplicit}, we have $\mathfrak H\ds_i(z_i)=-S_0^i=-A_0^i$ and
\[
\dot S_t^i-R_t^i+S_t^iC_1+C_1^TS_t^i-S_t^iC_2S_t^i=0\quad \lim_{t\to 1}(S_t^i)^{-1}=0,
\]
where $R_t^i$ here denotes $R_t^i\Big|_{df_{z_i}}$.

Therefore, by assumption and the result in \cite{Ro}, we have the following as claimed
\[
\mathfrak H\ds_2=-S^2_t\geq -S^1_t=\mathfrak H\ds_1.
\]
\end{proof}

\begin{proof}[Proof of Theorem \ref{HessianCompareII}]
Let us first assume that $\kappa\geq k$. Here we use the same notation as in proof of Theorem \ref{HessExplicit}. The matrix $S_t$ satisfies the equation
\[
\dot S_t-R_t+S_tC_1+C_1^TS_t-S_tC_2S_t=0, \quad  \lim_{t\to 1}(S_t)^{-1}=0,
\]
where
\[
R_t=\left(\begin{array}{ccc}
\sigma & 0 & 0\\
0 & 0 &0\\
0 & 0 & 0
\end{array}\right).
\]

By the result in \cite{Ro}, we have $S_t\leq S_t^k$, where $S^k_t$ is the solution of
\[
\dot S_t-R^k+S_tC_1+C_1^TS_t-S_tC_2S_t=0, \quad  \lim_{t\to 1}(S_t)^{-1}=0,
\]
and
\[
R^k=\left(\begin{array}{ccc}
(v_0\ds)^2-2k\ds & 0 & 0\\
0 & 0 &0\\
0 & 0 & 0
\end{array}\right).
\]

If we set $t=0$, then we get $\mathfrak H\ds=-S_0\geq -S_0^k$. Finally the matrix $S_0^k$ can be computed using the result in \cite{Le} which gives the first statement of the theorem. The reverse inequalities under the assumption $\kappa\leq k$ are proved in a similar way.
\end{proof}

\smallskip

\section{Cheeger-Yau Type Theorem in Subriemannian Geometry}\label{ChYau}

In this section, we give a lower bound on the solution of the subriemannian heat equation $\dot u=\Delta_Hu$ in the spirit of its Riemannian analogue in \cite{ChYa}. More precisely, let $\phi$ be the function defined by
\[
\phi(s)=\begin{cases}
\frac{\sqrt k(\sin(s\sqrt k)-s\sqrt k\cos(s\sqrt k))}{2-2\cos(s\sqrt k)-s\sqrt k\sin(s\sqrt k)} & \text{if } k>0\\
\frac{4}{s} & \text{if } k=0.
\end{cases}
\]

Let $h=h(t,s):[0,\infty)\times (0,\infty)\to\Real$ be a smooth solution to the following equation
\begin{equation}\label{geqn1}
\dot h=h''+h'\phi.
\end{equation}
where $\dot h$ and $h'$ denotes the derivative with respect to $t$ and $s$, respectively.

\begin{thm}
Let $(M,\Delta,g)$ be a three dimension Sasakian manifold with non-negative Tanaka-Webster curvature. Let $h$ be a solution of (\ref{geqn1}) which satisfies the conditions
\[
h'(0,s)\leq 0,\quad \lim_{s\to 0} h'(t,s)\leq 0.
\]
Let $x_0$ be a point on the manifold $M$ and let $r(\cdot)=d(x_0,\cdot)$, where $d$ is the subriemannian distance function. Let $\Omega$ be an open set which contains $x_0$ and have smooth boundary $\partial \Omega$. Let $u=u(t,x)$ be a smooth solution to the subriemannian heat equation $\dot u=\Delta_H u$ on $[0,\infty)\times M\backslash\,\Omega$ which satisfies
\[
u(0,\cdot)\geq h(0,r(\cdot)),\quad u(t,y)\geq h(t,r(y)) \quad y\in\partial\Omega.
\]
Then we have $u\geq h\circ r$.
\end{thm}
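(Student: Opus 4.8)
The plan is to compare the true heat solution $u$ with the radial model $w(t,x):=h(t,r(x))$ and to show, via a parabolic maximum principle, that $w$ lies below $u$. The first step is to measure the defect of $w$ as a solution of the heat equation. Because $r$ is the subriemannian distance, the eikonal identity $(v_1r)^2+(v_2r)^2=1$ holds wherever $r$ is differentiable, i.e.\ $|\nabla_H r|^2=1$; hence the chain rule for the sub-Laplacian gives $\Delta_H w=h''(t,r)|\nabla_H r|^2+h'(t,r)\Delta_H r=h''+h'\,\Delta_H r$. Subtracting this from $\dot w=\dot h(t,r)$ and using the profile equation $\dot h=h''+h'\phi$ yields
\[
\dot w-\Delta_H w=h'(t,r)\bigl(\phi(r)-\Delta_H r\bigr).
\]
So everything reduces to pinning down the signs of the two factors $h'$ and $\phi(r)-\Delta_H r$.

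Second, I would prove the pointwise estimate $\Delta_H r\le\phi(r)$ wherever $r$ is twice differentiable. Writing $\Psi(\tau):=\frac{\tau(\sin\tau-\tau\cos\tau)}{2-2\cos\tau-\tau\sin\tau}$, the sub-Laplacian comparison theorem (Corollary \ref{LaplacianCompare}) with lower bound $k\ge0$ gives $\Delta_H r\le\frac1r\Psi(\tau)$ for $\tau=r\sqrt{(v_0r)^2+k}$, while $\phi(r)=\frac1r\Psi(r\sqrt k)$. Since $(v_0r)^2\ge0$ forces $\tau\ge r\sqrt k$, the claim follows once one verifies that $\Psi$ is nonincreasing on the relevant range $(0,2\pi)$ below the first conjugate value — a single-variable calculus check. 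Consequently $\phi(r)-\Delta_H r\ge0$ almost everywhere.

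Third, I would show $h'\le0$ on all of $[0,\infty)\times(0,\infty)$. Differentiating $\dot h=h''+h'\phi$ in $s$, the function $p:=h'$ solves the scalar linear parabolic equation $\dot p=p''+\phi\,p'+\phi'\,p$, with $p(0,\cdot)\le0$ and $\lim_{s\to0}p(t,\cdot)\le0$ by hypothesis. A maximum principle then gives $p\le0$; the zeroth-order term $\phi'p$ is rendered harmless by the standard substitution $p=e^{\lambda t}q$, which makes its coefficient negative on any compact $s$-interval. Combining with the previous step, $\dot w-\Delta_H w=h'(t,r)\bigl(\phi(r)-\Delta_H r\bigr)\le0$, so $w$ is a subsolution of the heat operator.

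Finally, set $v:=u-w$; then $\dot v\ge\Delta_H v$ on $[0,\infty)\times(M\setminus\Omega)$, and the hypotheses $u(0,\cdot)\ge h(0,r(\cdot))$ and $u(t,y)\ge h(t,r(y))$ for $y\in\partial\Omega$ say exactly that $v\ge0$ on the parabolic boundary. The parabolic minimum principle then forces $v\ge0$, i.e.\ $u\ge h\circ r$. I expect the genuine difficulty to be regularity: the estimate $\Delta_H r\le\phi(r)$ holds only where $r$ is twice differentiable (Lebesgue almost everywhere, by the remark after Theorem \ref{HessianCompareI}), and $r$ is truly nonsmooth on the subriemannian cut locus, so $w$ is not a classical subsolution. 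I would remove this obstruction by Calabi's support-function technique: at a point $z$ where $r$ is nonsmooth, pushing the basepoint slightly along a minimizing normal geodesic produces a smooth upper barrier for $r$ near $z$ satisfying the same inequality, which upgrades $\Delta_H r\le\phi(r)$ to an inequality valid in the barrier (hence distributional) sense and legitimizes the maximum principle for the nonsmooth $w$. A secondary technical point is the noncompactness of $M\setminus\Omega$, which must be controlled by a mild growth assumption on $u-w$ in order to license the minimum principle.
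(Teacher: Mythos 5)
Your proposal is correct, and its first three steps coincide in substance with the paper's proof: the reduction $\dot w-\Delta_H w=h'(t,r)\bigl(\phi(r)-\Delta_H r\bigr)$ via the eikonal identity $|\nabla_H r|=1$, the bound $\Delta_H r\le\phi(r)$ from Corollary \ref{LaplacianCompare} (note the paper passes from $\tau=r\sqrt{(v_0r)^2+k}$ to $\tau=r\sqrt k$ silently, so your explicit check that $\tau\mapsto\frac{\tau(\sin\tau-\tau\cos\tau)}{2-2\cos\tau-\tau\sin\tau}$ is nonincreasing on $(0,2\pi)$ fills a step the paper glosses over), and $h'\le0$ via the maximum principle for the $s$-differentiated equation (\ref{geqn2}). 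The genuine divergence is how the nonsmoothness of $r$ on the cut locus is handled. You propose Calabi's support-function technique, producing smooth upper barriers for $r$ by shifting the basepoint along a minimizing geodesic; the paper instead uses the semiconcavity route: by \cite{CaRi} the function $r$ is locally semiconcave on $M\setminus\{x_0\}$, hence (since $h$ is nonincreasing in $s$) so is $G(t,x)=u(t,x)-h(t,r(x))+\delta t$, and at a putative interior minimum $(t_0,z)$ the paper applies \cite[Theorem 2.3.2]{Ca} to obtain points $z_i\to z$ with $\Delta_H G(t_0,z_i)\ge-\epsilon_i$, which combined with the heat equation and the inequality (\ref{geqn3}) gives $\partial_t G(t_0,z)\ge\delta>0$, a contradiction; letting $\delta\to0$ finishes the proof. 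The two devices are interchangeable here, but note what each costs: your Calabi barrier requires verifying in the subriemannian setting that the shifted distance $d(\gamma(\epsilon),\cdot)$ is smooth near $z$ and still obeys the comparison inequality — true in this three-dimensional contact case because all minimizers are normal, restrictions of minimizers are uniquely minimizing, and conjugate points occur strictly earlier along subarcs, but these are precisely the geodesic-geometry facts the semiconcavity argument lets the paper avoid, at the price of importing the regularity theorems of Cannarsa--Rifford and Cannarsa--Sinestrari. Your closing caveat about the noncompactness of $M\setminus\Omega$ is well taken: the paper's proof simply takes ``a local minimum'' of $G$ without securing its existence, so your proposed growth hypothesis (or an exhaustion argument) addresses a point the paper leaves implicit rather than a defect of your own argument.
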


\begin{rem}
When $k=0$, the function
\[
h(t,s)=(t+\epsilon)^{-5/2}e^{\frac{-s^2}{4(t+\epsilon)}}
\]
is a solution to the equation (\ref{geqn1}) for every $\epsilon>0$.
\end{rem}

\begin{proof}
Let $r$ be the subriemannian distance function from the point $x_0$ (i.e. $r(x)=d(x_0,x)$). By Corollary \ref{LaplacianCompare} and the chain rule, the following holds $\eta$-a.e.
\[
\Delta_H r\leq
\begin{cases}
\frac{\tau(\sin\tau-\tau\cos\tau)}{r(2-2\cos\tau-\tau\sin\tau)} & \text{if } \sigma>0\\
\frac{4}{r} & \text{if } \sigma=0
\end{cases}
\]
where $\sigma=r^2((v_0r)^2+k)$ and $\tau=r\sqrt{(v_0r)^2+k}$.

If we differentiate (\ref{geqn1}) with respect to $s$. Then we get
\begin{equation}\label{geqn2}
\dot h'=h'''+h''\phi +h'\phi'.
\end{equation}

By the maximum principle, we see that $h'(t,s)\leq 0$ for all $t$ and for all $s$ since $h'(0,s)\leq 0$ for all $s$ and $h'(t,0)\leq 0$ for all $t$ by assumptions.

Therefore, the following holds wherever $r$ is twice differentiable.
\begin{equation}\label{geqn3}
\begin{split}
\Delta_H(h(t,r))&=h''(t,r)+h'(t,r)\Delta_Hr\\
&\geq h''(t,r)+h'(t,r)\phi(r)\\
&= \dot h(t,r).
\end{split}
\end{equation}

Let $(t_0,z)$ be a local minimum of the function $G(t,x)=u(t,x)-h(t,r(x))+\delta t$, where $\delta$ is a positive constant. Let us assume that $t_0>0$ and $z$ in contained in the interior of $M\backslash\,\Omega$. By the result in \cite{CaRi}, $r$ is locally semiconcave on $M\backslash\,\{x_0\}$. Since $g$ is nondecreasing in $s$, $G(t,x)$ is locally semi-concave on $M\backslash\,\{x_0\}$ as well. Therefore, by \cite[Theorem 2.3.2]{Ca}, we can find a sequence of points $z_i$ on the manifold $M$ converging to $z$ and a sequence of numbers $\epsilon_i$ converging to $0$ such that
\[
\Delta_HG(t_0,z_i)\geq-\epsilon_i.
\]

Since $u$ is the solution of the subriemannian heat equation, it follows from (\ref{geqn3}) that
$\frac{d}{dt}G(t_0,z_i)\geq -\epsilon_i +\delta$. If we let $i$ go to $\infty$, then we have $0=\frac{d}{dt}G(t_0,z)\geq \delta$ which is a contradiction.

Since we have the condition $u(t,x)\geq h(t,r(x))$ for all points $x$ on the boundary of $\Omega$ and $u(0,\cdot)\geq h(0,r(\cdot))$, it follows that $G\geq 0$. Therefore, if we let $\delta$ go to $0$, then we have $u\geq h\circ r$ as claimed.
\end{proof}


\begin{thebibliography}{200}
\bibitem{AgLe2} A. Agrachev, P.W.Y. Lee: Generalized Ricci curvature bounds for three dimensional contact subriemannian manifolds, preprint, arXiv: 0903.2550.
\bibitem{BaGa1} F. Baudoin, N. Garofalo: Generalized Bochner formulas and Ricci lower bounds for sub-Riemannian manifolds of rank two, preprint, arXiv:0904.1623.
\bibitem{BaGa2} F. Baudoin, N. Garofalo: Curvature-dimension inequalities and Ricci lower bounds for sub-Riemannian manifolds with transverse symmetries, preprint, arXiv:1101.3590.
\bibitem{BaBoGa} F. Baudoin, M. Bonnefont, N. Garofalo: A sub-riemannian curvature-dimension inequality, volume doubling property and the Poinca\'re inequality, preprint, arxiv: 1007.1600.
\bibitem{BeRi} A. Bella\"iche, J.-J. Risler: Sub-Riemannian Geometry, Birkh\"auser Verlag, Basel, 1996.
\bibitem{Bh} R. Bhatia: Matrix analysis. Graduate Texts in Mathematics, 169. Springer-Verlag, New York, 1997.
\bibitem{Bl} D. Blair: Riemannian geometry of contact and symplectic manifolds. Progress in Mathematics, 203. Birkh\"auser Boston, Inc., Boston, MA, 2002.
\bibitem{BoRo} U. Boscain, F. Rossi: Invariant Carnot-Caratheodory metrics on $S^3$, $\ {\rm SO}(3)$, $\ {\rm SL}(2)$, and lens spaces. SIAM J. Control Optim. 47 (2008), no. 4, 1851--1878.
\bibitem{Ca} P. Cannarsa, C. Sinestrari: Semiconcave functions, Hamilton-Jacobi equations, and optimal control. Progress in Nonlinear Differential Equations and their Applications, 58. Birkhäuser Boston, Inc., Boston, MA, 2004.
\bibitem{CaRi} P. Cannarsa, L. Rifford: Semiconcavity results for optimal control problems admitting no singular minimizing controls. Ann. Inst. H. Poincar\'e Anal. Non Lin\'eaire 25 (2008), no. 4, 773--802.
\bibitem{ChYa1} S. Chanillo, P.-C. Yang: Isoperimetric inequalities \& volume comparison theorems on CR manifolds. Ann. Sc. Norm. Super. Pisa Cl. Sci. (5) 8 (2009), no. 2, 279--307.
\bibitem{ChYa} J. Cheeger, S.T. Yau: A lower bound for the heat kernel. Comm. Pure Appl. Math. 34 (1981), no. 4, 465–-480.
\bibitem{Ju} N. Juillet: Geometric inequalities and generalized Ricci bounds in the Heisenberg group. Int. Math. Res. Not. IMRN 2009, no. 13, 2347-–2373.
\bibitem{Le} J.J. Levin: On the matrix Riccati equation.  Proc. Amer. Math. Soc.  10  1959 519--524.
\bibitem{LiZe1} C.B. Li, I. Zelenko: Differential geometry of curves in Lagrange Grassmannians with given Young diagram. Differential Geom. Appl. 27 (2009), no. 6, 723–742.
\bibitem{LiZe2} C.B. Li, I. Zelenko: Parametrized curves in Lagrange Grassmannians, C.R. Acad. Sci. Paris, Ser. I, Vol. 345, Issue 11, 647--652.
\bibitem{LiZe3} C.B. Li, I.Zelenko: Jacobi Equations and Comparison Theorems for Corank 1 sub-Riemannian Structures with Symmetries, Journal of Geometry and Physics 61 (2011) 781--807.
\bibitem{Mo} R. Montgomery: A tour of subriemannian geometries, their geodesics and applications. Mathematical Surveys and Monographs, 91. American Mathematical Society, Providence, RI, 2002.
\bibitem{Ro} H.L. Royden: Comparison theorems for the matrix Riccati equation, Comm. Pure Appl. Math. 41 (1988), no. 5, 739–-746.
\bibitem{Ta} S. Tanno: Variational problems on contact Riemannian manifolds. Trans. Amer. Math. Soc. 314 (1989), no. 1, 349–-379.
\end{thebibliography}
\end{document}